\documentclass[trsc,nonblindrev]{informs3}

\OneAndAHalfSpacedXI 


\usepackage{endnotes}
\let\footnote=\endnote

%


\def\R{\ensuremath{\mathbb{R}}}

\newcommand{\set}[1]{\ensuremath{\mathcal{#1}}}

\usepackage{natbib}
 \bibpunct[, ]{(}{)}{,}{a}{}{,}%

\bibliographystyle{informs2014.bst}

\TheoremsNumberedBySection  

\EquationsNumberedThrough    
\usepackage{algorithm} 
\usepackage{color}
\usepackage{algpseudocode}
\usepackage{hyperref}
\usepackage{multirow}
\usepackage{booktabs}
\usepackage{subcaption}
\usepackage{cleveref}

\begin{document}

\RUNAUTHOR{Deng and Pantuso}
\RUNTITLE{Zonification and Pricing in Carsharing}

\TITLE{Zonification and Pricing in Carsharing}

\ARTICLEAUTHORS{%
\AUTHOR{Jiali Deng}
\AFF{Department of Mathematical Sciences, University of Copenhagen, Copenhagen, 2100, \EMAIL{jd@math.ku.dk}} 
\AUTHOR{Giovanni Pantuso}
\AFF{Department of Mathematical Sciences, University of Copenhagen, Copenhagen, 2100, \EMAIL{gp@math.ku.dk}}
} 

\ABSTRACT{In this article we address the problem of partitioning a carsharing business area into pricing zones. 
We formalize the problem mathematically and show that the resulting partitioning problem can be formulated as a binary integer programming problem. The partitioning problem is then extended to include pricing decisions, yielding the first joint zonification and pricing problem. 
The resulting mixed integer (possibly nonlinear) programming problem is solved exactly using an ad-hoc integer Benders decomposition for which we define effective problem-specific improvements. 
Extensive tests based on a real-world carsharing system demonstrate that the method outperforms a state-of-the-art commercial solver on problems of size comparable to those encountered in real-world instances. Furthermore, by jointly optimizing prices and pricing zones, we observe a profit increase of $7.01\%$ compared to a zip code-based prior partition of the business area, and of $25.61\%$ compared to a scenario where the business area is not partitioned.}
\KEYWORDS{one-way carsharing, pricing, zonification, tessellation, districting, benders decomposition} 

\maketitle
\section{Introduction}\label{sec:intro}
Carsharing pricing decisions have attracted significant attention in the research literature, see e.g., \citet{boyaci2019investigating, zhang2022optimization,huang2020vehicle, soppert2022differentiated,Jorge2015TripVariations,pantuso2022exact,Muller23}. They have been identified as a promising instrument to resolve fleet imbalances, see e.g., \citet{illgen2019literature}, and improve profits and service rates. 
Among other things, prices are commonly differentiated geographically, see e.g.,  \citet{Jorge2015TripVariations, huang2020vehicle, pantuso2022exact}, that is, dependent on the origin and/or destination of the rental. This typically implies that the business area is partitioned into distinct pricing zones that are independent of pricing decisions \citep{Jorge2015TripVariations, boyaci2019investigating, li2022dynamic} and provided a priori \citep{lu2021performance,huang2020vehicle, pantuso2022exact}. 
The decision of how to optimally divide a business area into pricing zones has not been investigated in detail.

In this paper, we focus on the problem of partitioning a set of carsharing stations into distinct pricing zones. 
We refer to this problem as the \textit{zonification} problem.
The problem, which is motivated by an underlying industrial case, can be briefly described as follows. 
Consider a one-way station-based carsharing system and assume a given fixed set of stations, see e.g., \Cref{fig:pricingzones:a}. The goal of the service provider is to optimally adjust prices and pricing zones periodically during the day, and for small intervals of time (e.g., every hour), in order to adapt to changes in demand patterns. The prices are differentiated by the origin and/or destination of the trip. This entails informing customers about the current prices from their zone to every other zone upon booking. 
Thus, for each time interval, the problem becomes that of partitioning the stations into pricing zones. 
Each zone is a subset of the stations. However, the resulting partition must be such that the zones created form individual ``islands'' or, in other words, they are ``visually disjoint''. This requirement is motivated by the necessity to communicate the pricing mechanism in an easy and intuitive manner via mobile applications. 
The partition illustrated in \Cref{fig:pricingzones:b} would be acceptable as the zones form detached islands. However, the partition in \Cref{fig:pricingzones:c} would not be acceptable as the areas covered by the individual zones overlap. This particular requirement gives rise to a rich set partitioning problem. As we illustrate in \Cref{sec: literature review}, it shares similarities with tessellation and districting problems, though holding distinct characteristics.

\begin{figure}[t]
     \centering
     \captionsetup[subfigure]{font=footnotesize}
     \begin{subfigure}[b]{0.3\textwidth}
         \includegraphics[width=\textwidth]{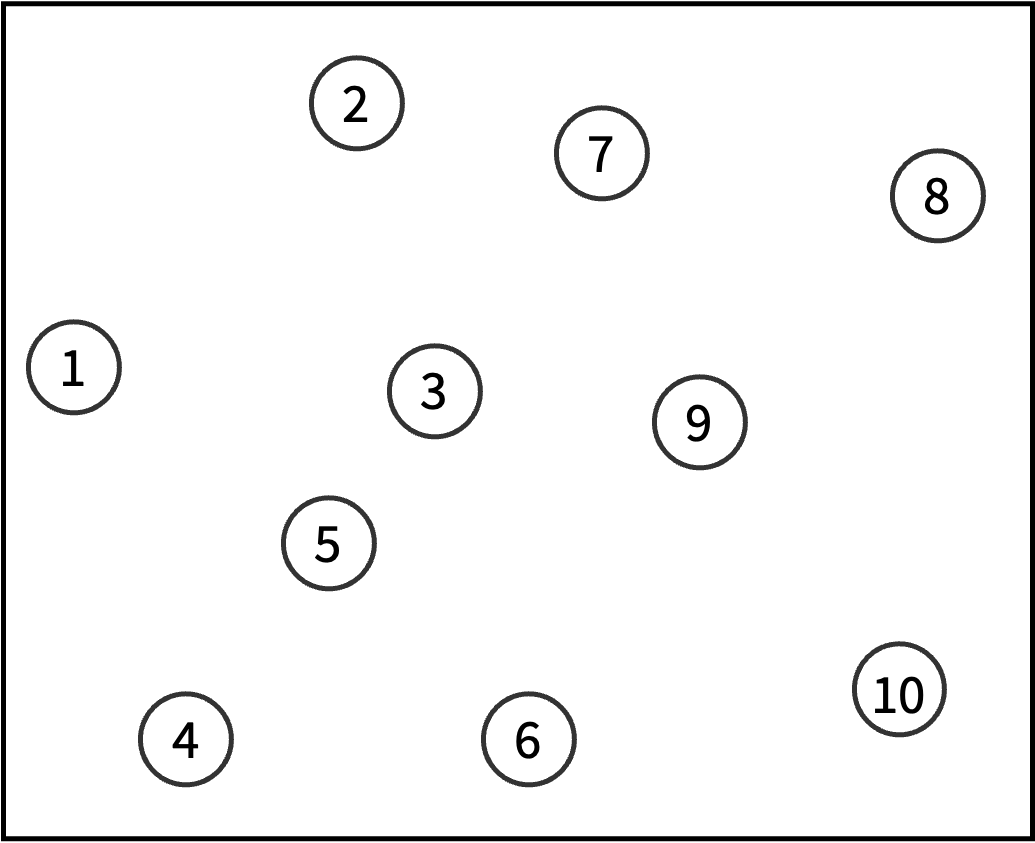}
         \caption{Carsharing stations.}
         \label{fig:pricingzones:a}
     \end{subfigure}
     \hfill
     \begin{subfigure}[b]{0.3\textwidth}
         \includegraphics[width=\textwidth]{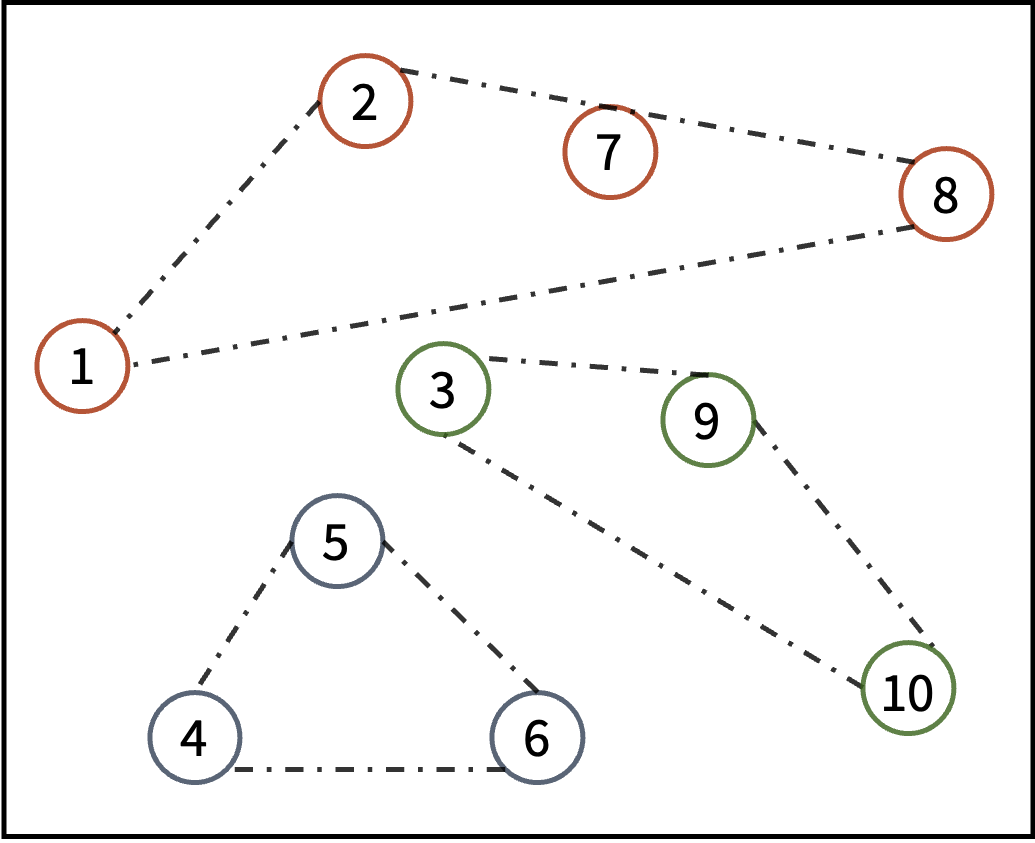}
         \caption{Acceptable partition.}
         \label{fig:pricingzones:b}
     \end{subfigure}
     \hfill
     \begin{subfigure}[b]{0.3\textwidth}
         \includegraphics[width=\textwidth]{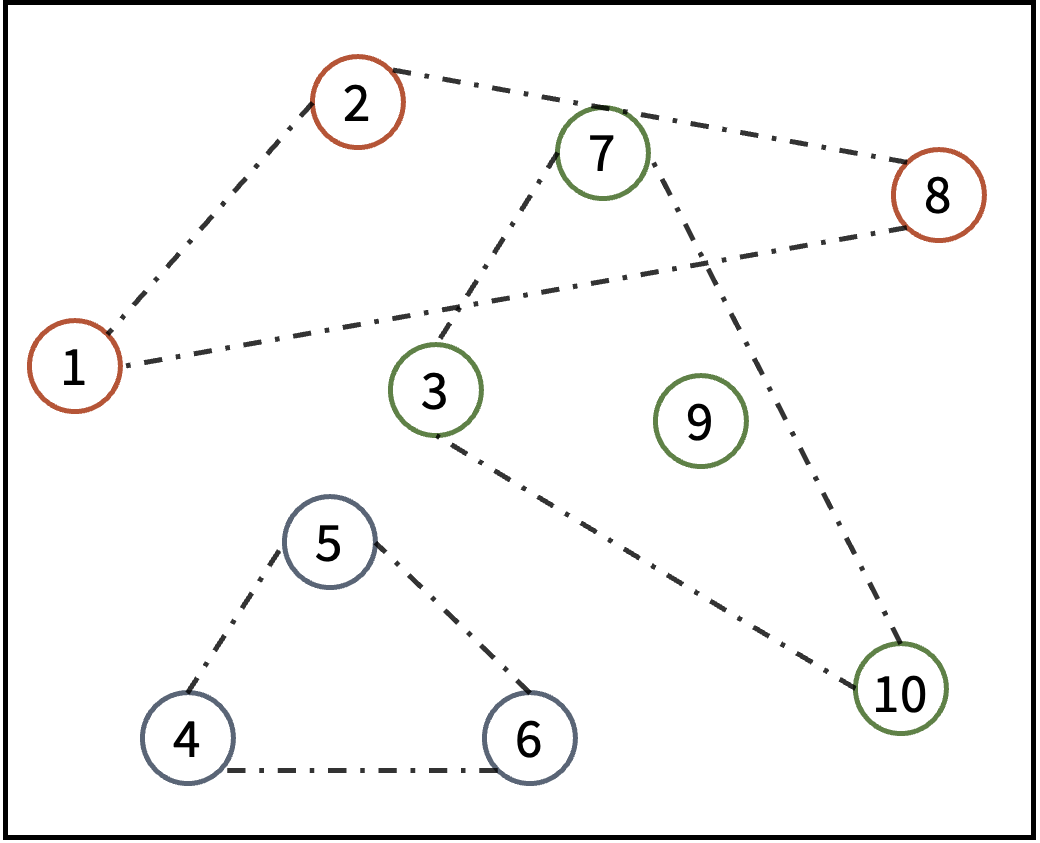}
         \caption{Non-acceptable partition.}
         \label{fig:pricingzones:c}
     \end{subfigure}
        \caption{Carsharing stations, acceptable and non-acceptable partitions.}
        \label{fig:pricingzones}
\end{figure}

In this paper, we give this special partitioning problem a precise mathematical interpretation and express it as a combinatorial optimization problem. We show that this problem can be formulated as a binary integer programming problem.
We then combine the zonification problem with that of setting zone-to-zone prices, hence obtaining a joint zonification and pricing problem. To address the computational complexity of the resulting optimization model, we propose an ad-hoc integer Benders decomposition for which we develop a number of effective improvements. Finally, we test the method on instances derived from a real-life carsharing system.

The remainder of this paper is organized as follows. In \Cref{sec: literature review}, we review the relevant literature and give a more precise account of our contributions with respect to that. In \Cref{sec:zonification}, we formally introduce the zonification problem. In \Cref{sec: pricing problem} we extend the zonification problem to account for pricing decisions yielding a (possibly nonlinear) MIP problem. In \Cref{sec: tailored benders decomposition approach} we present a tailored Benders decomposition, introducing specific efficiency measures. In \Cref{sec: computational experiments} we report on an extensive computational study. Particularly, we shed light both on the efficiency of the proposed decomposition method and on the practical effect of joint zonification and pricing decisions. Finally, we draw conclusions in \Cref{sec: conclusion}.

\section{Literature Review and Contributions} \label{sec: literature review}
In this section we summarize the relevant literature. We start by providing some insights into similar partitioning problems. Following, we summarize the relevant literature on pricing decisions in carsharing. We conclude by summarizing our contributions with respect to the available literature. 

\subsection{Spatial partitioning problems} \label{LR_partition}
The problem of partitioning the carsharing stations into pricing zones shares similarities with well studied problems. These include, in particular, tessellation problems \citep{okabe1997locational,okabe2009spatial} and districting problems \citep{duque2012max,kalcsics2019districting}. 

Tessellation problems can be summarized as follows. Given a finite set of distinct, isolated points in a continuous space, we associate all locations in that space with the closest member of the point set \cite{okabe2009spatial}. The result is a partition of a continuous space. This problem has widespread application and is often referred to with different aliases. The term \textit{Voronoi diagram} appears to be one of the most frequent terms used. Voronoi diagram is a proximity-based partitioning method and provide a convex and compact partition of a continuous space. 
Several variants of Voronoi diagrams exist. These include power Voronoi Diagram and weighted Voronoi Diagram and the associated algorithms. The book \cite{okabe2009spatial} provides a thorough exposition of such variants. 
These concepts have found wide applications also in logistics districting problems, see e.g.,  \cite{galvao2006multiplicatively, novaes2009solving}.

Similar to tessellation problems, we are concerned with partitioning a service region into disjoint areas. 
Nevertheless, there are at least two central differences between our problem and a tessellation problem. 
First, we operate on a discrete space. That is, we need to partition a discrete set of points in $\R^2$ into sub-regions. Second, Voronoi diagrams start from a given set of points (generators) and then assign the remaining points (typically in $\R^2$) to the closest generator. As it will be more clear in \Cref{sec:zonification}, in our problem the set of generators is not given. Choosing the optimal set of generators is thus a central task we need to address. 
Nevertheless, given the similarity between tessellation problems and our problem, we will refer to a solution of our problem as a \textit{discrete tessellation}.

Districting problems are defined as partitioning all basic units in a geographic area into a number of geographic clusters, named districts. Almost all districting approaches require districts to be \textit{contiguous} \citep{kalcsics2019districting}. The definition of contiguity can vary among problems. 
If it is possible to identify, for every basic unit, an explicit neighborhood (e.g., all geographic locations sharing the same zip code or connected by a transportation link), then one can use a graph-based definition of  contiguity. 
Particularly, given a graph that describes neighboring relationships, a district is contiguous if the basic units that form a district induce a connected subgraph. 
Districting problems are often formulated as MIP problems and there exists a wide range of applications. These include, logistics community partition \citep{jarrah2012large, carlsson2013robust}, commercial territory design \citep{salazar2011new, bender2016multi}, political districting \citep{bozkaya2003tabu}, electrical power districting \citep{bergey2003decision, bergey2003simulated}. 
Within these applications, the contiguity criteria is usually satisfied by incorporating an exponential number of constraints \citet{hess1965nonpartisan} based on a neighbourhood graph representation, though polynomial formulations exist \citep{duque2012max}. 
In addition to contiguity, districts are often required to be \textit{compact}. 
Intuitively, a district is said to be geographically compact if it is undistorted (preferably round-shaped) \citep{kalcsics2019districting}. In logistics and transportation applications, compactness acts as a proxy of travel time. There is no uniformly accepted definition of compactness \citep{mehrotra1998optimization}. Mathematically, it has been quantified in different ways \citet{bergey2003decision, bard2009large,hess1965nonpartisan}. The typical way in which compactness is ensured is by minimizing one such measure of compactness. One of the first and most popular measures is the \textit{moment of inertia} which is the squared sum of the distances between the units in the district and center of the district, weighted by the population of the units. This measure is often used in political districting applications, see e.g., \citet{hess1965nonpartisan}. Another example is provided by \citet{salazar2011new} which minimizes the sum of distances between the units in the same district. 
However, in general, minimizing distance-based measures may fail to deliver a partition with non-overlapping districts \cite{kalcsics2019districting}.

Similar to districting problems we are also concerned with partitioning a geographical set of locations in order to obtain somewhat compact subsets. In our case, the requirement of having visually disjoint districts (zones in our case), see \Cref{sec:intro}, leads to our own specific interpretation of compactness. Contrary to the vast majority of districting problems, we are however prevented from ensuring compactness by minimizing a measure of compactness. In our problem a set of compact (according to the definition we will give in \Cref{sec:zonification}) districts or zones is the elementary building block of a pricing problem which pursues maximization of a measure of business performance. We will show that we are able to induce compact zones by means of binary variables and linear constraints .

\subsection{Carsharing pricing}
The pricing policies considered in the literature can be classified along several dimensions. One such classification is attempted by \cite {pantuso2022exact} who divides pricing policies as either \textit{individual} or \textit{collective}. Individual pricing policies are targeted to individual users and require an interaction between the service provider and the individual user. In such interaction, the final trip price is agreed upon. Collective pricing policies are, instead, targeted to the entire user base. They have the scope of influencing the cumulative rental demand by differentiating prices geographically or across time. In this paper we focus on collective pricing policies. 

Among the collective pricing policies, several devise spatially differentiated prices. In these methods, the price changes according to the origin and/or destination of the trip. These methods are, in general, aimed to hedge against potential geographical mismatch between supply and demand.
\citet{waserhole2016pricing} consider two origin-destination based pricing policies, namely static and dynamic. The former disregards changes occurring in the carsharing system and set prices based on average values. The latter adapts to the arrival of new information. Both pricing policies set prices according to individual pairs of carsharing stations.
\citet{ren2019novel} assume a fleet of shared electric vehicles and define prices that vary with time and with the origin and destination of the trip. Particularly, prices are set between pairs of carsharing stations. Their pricing mechanism also takes into account the profit obtained from the vehicle-to-grid interaction. 
\citet{xu2018electric} study fleet sizing and trip pricing decisions for a station-based one-way carsharing system. They assume demand is elastic with respect to price and define prices that can vary according to the origin and destination  station as well as the time and the specific user group.
\citet{angelopoulos2018incentivized} assume a pricing mechanism where price incentives are offered to users traveling from stations with a surplus of vehicle to stations with a shortage of vehicles. 
\cite{zhang2022optimization} consider a station-based carsharing service and assume the price depends on the origin and destination station. User may however receive a price incentive to pick up the car from a nearby station. A common feature in \cite{waserhole2016pricing,ren2019novel,xu2018electric,angelopoulos2018incentivized,zhang2022optimization} is that the prices are set according to specific origin and destination pairs and the business area is not divided into zones.

\citet{huang2020vehicle} consider a station-based carsharing service and explore a pricing mechanism based both on the origin and destination of the rental. Additionally, their investigated pricing mechanism incorporates penalties for leaving and arriving at specific stations. The two penalties are independent on each other. Unlike in the previous studies, the authors assume that the business area is partitioned into zones, and prices depend on the origin and destination zone (or both). 
Pricing zones are also assumed in the work of \citet{hansen2018pricing,pantuso2020formulations}, later extended by \cite{pantuso2022exact}. They propose a pricing mechanism made of a per-minute fee and a drop-off fee. The former is independent while the latter is dependent on the origin and destination zone of the trip. 
 \citet{soppert2022differentiated} differentiate prices for a free-floating carsharing service according to the origin and destination zone of the trip.
\citet{Muller23} consider a customer-centric pricing mechanims where prices can vary according to the origin and time of the rental. They compare the pricing mechanisms to a number of benchmark location-based pricing mechanisms. Among these we find pricing mechanisms where the business area is partitioned into zones of $1km\times 1km$ and the way the price is set for a given zone gives rise to different pricing mechanisms.
\citet{li2022dynamic} consider a one-way station-based service. They assume prices depend on the origin and destination station of the rental. However, to reduce the dimension of the problem, they cluster carsharing stations into zones and assume that the same price applies to any station within the same zone. Zones are determined prior to solving the pricing problem. 
\citet{boyaci2019investigating} consider a one-way, station-based, carsharing service. The pricing mechanism entails offering incentives to users for flexibility in choosing the origin and destination station.
\citet{lu2021performance} consider a one-way station-based carsharing service. They study the problem of determining rental prices in a context of competition with private cars. Particularly, they determine the prices between any pair of zones for each time. Zones are input to the problem. \citet{Jorge2015TripVariations} consider a one-way station-based system. They start by determining the zones of the city by clustering stations based on the similarity of their ideal demand and supply. This is, in turn, computed assuming relocation activities have cost zero. Following, they determine the prices which can vary with the origin and destination zone of the rental as well as with time.

A common characteristic to these studies is that they all assume that the business area is partitioned into pricing zones. Such pricing zones are either given \citep{huang2020vehicle, hansen2018pricing,pantuso2020formulations, lu2021performance, pantuso2022exact, soppert2022differentiated,Muller23} or determined prior to addressing, and independently of, pricing decisions \citep{boyaci2019investigating, li2022dynamic}. We extend this work by jointly addressing zonification and pricing decisions.

\subsection{Contributions}
The contribution of this paper can be stated as follows.
\begin{itemize}
    \item We formalize mathematically the problem of partitioning a set of carsharing stations into non-overlapping pricing zones. We begin with a rather abstract definition of the partitioning problem and prove that a corresponding partition ensures the non-overlapping property. We then show that the set of partitions satisfying the given definition can be modeled using binary variables and linear constraints.
    \item We then extend the zonification problem to include pricing decisions. This extends the available carsharing literature by providing the first joint zonification and pricing problem. The resulting problem is formulated as a (possibly nonlinear) MIP problem. Linearity of the problem depends on the specification of the measure of performance of zonification and pricing decisions.
    \item We provide a tailored integer Benders decomposition to find exact solutions to the problem even when the chosen performance measure yields a nonlinear problem. For this method we incorporate a number of problem-specific improvements.
    \item We provide empirical evidence on both the performance of the solution method and the practical effect of joint zonification and pricing decisions. This is achieved by solving instances generated from a real-world carsharing service based in Copenhagen. 
\end{itemize}

\section{Zonification Problem}\label{sec:zonification}

We start by introducing the problem in an abstract way. 
Later we show how this problem relates to that of defining pricing zones as discussed in \Cref{sec:intro}.

Given a discrete metric space $(\set{I},d)$ we are concerned with the problem of finding a special partition of $\set{I}$ whose characteristics are described in \Cref{def:partition}.
\begin{definition}[Discrete tessellation]
\label{def:partition}
Let $\set{G}\subseteq \set{I}$. A collection $\set{V}(\set{G})\subset 2^{\set{I}}$ of subsets of $\set{I}$ is called the \textit{discrete tessellation of $\set{I}$ induced by $\set{G}$} iff the following properties hold:
\begin{enumerate}
    \item (Disjunction) For every two sets $\set{V},\set{U}\in \set{V}(\set{G})$ we have $\set{V}\cap \set{U}=\emptyset$ 
    \item (Cover) $\bigcup \set{V}(\set{G})=\set{I}$
    \item (One generator) For every $\set{V}\in \set{V}(\set{G})$ we have $\lvert \set{V}\cap \set{G}\rvert= 1$ 
    \item (Closest to generator) For every set $\set{V}\in \set{V}(\set{G})$ let $c\in\set{V}$ be the element such that $\{c\}=\set{V}\cap \set{G}$. Then, for every element $v\in \set{V}$ of the set we have that $d(v,c)\leq d(v,k)$ for all $k\in \set{G}$.
\end{enumerate}

\end{definition}

Properties 1 and 2 define a partition of $\set{I}$ (i.e., a disjoint cover). Property 3 ensures that each set in the partition contains exactly one element of the set $\set{G}$. 
The elements of $\set{G}$ are thus understood as the \textit{generators} of the tessellation. 
Finally, property 4 characterizes the partition as a tessellation. 
It states that each point in $\set{I}$ is assigned to the subset that contains its closest generator from $\set{G}$ in the sense of the metric $d$. Thus, each set of the partition $\set{V}(\set{G})$ contains the points of $\set{I}$ that are closest to the single element of $\set{G}$ in $\set{V}$ than to any other element of $\set{G}$ in the sense of the metric.

We are particularly concerned with the problem of finding an optimal tessellation according to some measure of performance. That is
\begin{equation}
\label{eq:max_partition}
    \max_{\set{G}\in 2^{\set{I}}}\left\{R \big(\set{V}(\set{G})\big)~|~\text{Properties (1)-(4) hold}\right\}
\end{equation}
where $R$ is a mapping from the set of all partitions of $\set{I}$ to the real numbers. 
Thus, the problem can be seen as that of finding generators $\set{G}\subseteq\set{I}$ whose induced tessellation maximizes $R$. Of particular interest are the problems where we look for a partition made of exactly $S$ subsets. That is, where we restrict the maximization over the set $\set{G}^S:=\{\set{G}\in 2^{\set{I}}|\lvert \set{G} \rvert=S\}$.

The abstract problem of finding an optimal discrete tessellation finds a concrete application in the carsharing zonification problem sketched in \Cref{sec:intro}. There, a partition of the carsharing stations must be such that the subsets are ``visually disjoint'', see \Cref{fig:pricingzones:b}.
The mathematical interpretation we give of this requirement, is that every two subsets of the partition have non-overlapping convex hulls. 
\Cref{prop:disjoint_convex_hulls} shows that, by working with partitions of the type introduced in \Cref{def:partition}, we automatically satisfy this property.

\begin{proposition}[Disjoint convex hulls]\label{prop:disjoint_convex_hulls}
Let $\set{I}$ be a finite subset of $\R^n$. Let $\set{V}(\set{G})\subset 2^{\set{I}}$ be the discrete tessellation of $\set{I}$ induced by $\set{G}\subseteq\set{I}$ as defined in \Cref{def:partition}. 
Then, for all sets $\set{V},\set{U}\in\set{V}(\set{G})$ we have
$$iConv(\set{V})\cap iConv(\set{U})=\emptyset$$
where $iConv(\set{V})$ (resp. $iConv(\set{U})$) is the interior of the convex hull $Conv(\set{V})$ (resp. $Conv(\set{U})$) of the points in the set $\set{V}$ (resp. $\set{U}$).
\end{proposition}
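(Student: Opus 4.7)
The plan is to exploit the fact that, in Euclidean space, the locus of points that are closer (in the sense of $d$, which I shall take to be the Euclidean metric since $\set{I}\subseteq\R^n$) to one generator than to another is a closed halfspace bounded by the perpendicular bisector of the two generators. Property~4 of \Cref{def:partition} then forces each block of the tessellation to lie entirely inside such a halfspace, and convexity propagates the property from the points of $\set{V}$ to $Conv(\set{V})$.

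Concretely, I would proceed as follows. Fix two distinct blocks $\set{V},\set{U}\in\set{V}(\set{G})$. By property~3, each contains exactly one generator, say $c_V$ and $c_U$; by property~1 these are distinct points of $\R^n$. Define the closed halfspace
\begin{equation*}
H_{V,U}=\{x\in\R^n:\|x-c_V\|\le\|x-c_U\|\}=\left\{x\in\R^n:(c_U-c_V)^\top x\le \tfrac{1}{2}(\|c_U\|^2-\|c_V\|^2)\right\},
\end{equation*}
and similarly the closed halfspace $H_{U,V}$ on the other side of the perpendicular bisector of $c_V$ and $c_U$. Property~4 applied to $\set{V}$ gives $\set{V}\subseteq H_{V,U}$, and applied to $\set{U}$ gives $\set{U}\subseteq H_{U,V}$. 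Since $H_{V,U}$ and $H_{U,V}$ are convex, taking convex hulls yields $Conv(\set{V})\subseteq H_{V,U}$ and $Conv(\set{U})\subseteq H_{U,V}$.

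It remains to pass from the closed halfspaces to their interiors. The standard fact I want to invoke is that any convex set contained in a closed halfspace $\{a^\top x\le b\}$ has its topological interior (in $\R^n$) contained in the open halfspace $\{a^\top x<b\}$: otherwise a point of the interior would lie on the boundary hyperplane, and a small open ball around it (lying in the convex set) would contain points with $a^\top x>b$, a contradiction. Applying this twice gives $iConv(\set{V})\subseteq\{a^\top x<b\}$ and $iConv(\set{U})\subseteq\{a^\top x>b\}$, which are disjoint; if either interior is empty, disjointness is immediate.

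The main obstacle, which is really only a bookkeeping point, is to make sure the argument covers the low-dimensional case where $\set{V}$ or $\set{U}$ has too few (or too affinely dependent) points for $Conv(\set{V})$ or $Conv(\set{U})$ to have a nonempty interior in $\R^n$; the convention $iConv(\cdot)=\emptyset$ in that situation makes the conclusion vacuous. A secondary point worth stating explicitly in the proof is that the argument relies on $d$ being Euclidean (so that equidistant loci are hyperplanes); for a general metric, bisector sets need not be convex and the statement can fail.
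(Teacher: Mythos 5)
Your proof is correct, and it takes a genuinely different route from the paper's. The paper argues by contradiction: assuming $iConv(\set{V})\cap iConv(\set{U})\neq\emptyset$, it asserts that some point of the finite set $\set{V}$ lies in $iConv(\set{U})$ (or vice versa), takes a nearby point $u'\in Conv(\set{U})$ with $d(v,g^{\set{U}})\leq d(u',g^{\set{U}})$, writes $u'$ as a convex combination of points of $\set{U}$, and derives a violation of property 4 of \Cref{def:partition}. Your argument is instead the classical Voronoi separation: property 4, applied with $k=c_U$ (which lies in $\set{G}$ by property 3 and is distinct from $c_V$ by property 1, so $c_U-c_V\neq 0$ and the bisector is a genuine hyperplane), places $\set{V}$ in the closed dominance halfspace $H_{V,U}$ and $\set{U}$ in $H_{U,V}$; convexity of halfspaces transfers this to the hulls, and the interior-in-a-closed-halfspace fact pushes each interior into one of the two disjoint open halfspaces. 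Your route buys several things: it is direct and constructive (it exhibits the separating hyperplane), it handles the degenerate case of lower-dimensional hulls explicitly via the convention $iConv(\cdot)=\emptyset$, and it sidesteps the paper's delicate opening inference --- the claim that intersecting interiors force a vertex of one hull into the interior of the other is not valid for arbitrary pairs of convex hulls (two triangles overlapping in a hexagram pattern have intersecting interiors while no vertex of either lies inside the other), so as written that step of the paper needs additional justification drawing on the tessellation structure; the paper's later step, that $d\bigl(\sum_{i=1}^N\alpha_iu_i,g^{\set{U}}\bigr)$ being large forces some $d(u_i,g^{\set{U}})$ to be large, likewise silently uses convexity of the norm-induced distance, an assumption you make explicit by restricting to the Euclidean metric. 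The only thing the paper's phrasing might appear to buy --- staying in the metric language of \Cref{def:partition} --- is illusory, since its proof also needs the linear structure of $\R^n$; your closing remark that bisector sets in general metrics need not be convex, so the statement can fail there, correctly delimits the scope of the proposition.
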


\begin{proof}{Proof of \Cref{prop:disjoint_convex_hulls}}
We prove this result by contradiction. 
Assume $\set{V}(\set{G})$ is a discrete tessellation and that $iConv(\set{V})\cap iConv(\set{U})\neq\emptyset$ for some $\set{V}$ and $\set{U}\in \set{V}(\set{G})$. 
Then, there exists either some $v\in\set{V}$ in $iConv(\set{U})$ or some $u\in\set{U}$ in $iConv(\set{V})$.
Assume, without loss of generality, that $v\in iConv(\set{U})$. Then, we also have that $v\in Conv(\set{U})$ since $iConv(\set{U})\subset Conv(\set{U})$.
Furthermore, since $iConv(\set{U})$ is open, there exists $r>0$ such that $\set{B}(v,r)=\{u'\in \R^n\vert d(u',v)<r\}\subseteq iConv(\set{U})$.
Consequently, there exists some $u'\in \set{B}(v,r)$ (which may or may not be in $\set{U}$) such that $$d(v,g^{\set{U}})\leq d(u',g^{\set{U}})$$
where $g^{\set{U}}\in\set{G}$ is the generator of $\set{U}$.

Since $iConv(\set{U})\subset Conv(\set{U})$ then we also have that $u'\in Conv(\set{U})$. 
Thus, there exist $\alpha_i\geq 0$ for $i=1,\ldots,N$, with $\sum_{i=1}^N\alpha_i=1$, such that $u'=\sum_{i=1}^N\alpha_iu_i$ where $u_i\in\set{U}$ for $i=1,\ldots,N$.
Then we have
$$d(v,g^{\set{U}})\leq d(\sum_{i=1}^N\alpha_iu_i,g^{\set{U}})$$
which is only possible if there exists some $\alpha_i> 0$ and $u_i$ such that 
$$d(v,g^{\set{U}})\leq d(u_i,g^{\set{U}})$$
However, since $d(u_i,g^{\set{U}})\leq d(u_i,g)$ for all $g\in\set{G}$, it follows that $d(v,g^{\set{U}})\leq d(v,g)$ for all $g\in\set{G}$. This violates property 4 in \Cref{def:partition}, and contradicts that $\set{V}(\set{G})$ is a discrete tessellation. Hence, we must have that $iConv(\set{V})\cap iConv(\set{U})=\emptyset$.  \Halmos
\end{proof}

\Cref{prop:disjoint_convex_hulls} clarifies that, by solving problems of type \eqref{eq:max_partition}, one automatically ensures that the partition obtained generates convex hulls with disjoint interiors as required. The convex hulls may however intersect on the boundary. 

We proceed by showing that a discrete tessellation can be enforced by integer linear programming constraints. 
We introduce binary variables $a \in \{0,1\}^{|\mathcal{I}| \times |\mathcal{I}|}$. Variable $a_{ii}$ takes value $1$ if $i\in\set{I}$ is designated as a generator, while variable $a_{ij}$ takes value $1$ if element $j$ is a generator and element $i$ belongs to the same subset as $i$. The set of all feasible discrete tessellations made of exactly $S$ subsets can be expressed using $\mathcal{O}(\vert\set{I}\vert^3)$ linear constraints as follows:
$$\set{T} :=
\left\{a\in \{0,1\}^{|\mathcal{I}| \times |\mathcal{I}|}\left\lvert\begin{matrix}\sum_{j\in\mathcal{I}}a_{ii} = S \\
\sum_{j\in\mathcal{I}}a_{ij} =1 &\forall i\in\mathcal{I}\\
a_{ij}\leq a_{jj}, &\forall i,j\in\mathcal{I}\\
d(i,j_1)a_{i,j_1} \leq d(i,j_2)a_{j_2,j_2}+ d(i,j_1) (1-a_{j_2,j_2}) &\forall i,j_1,j_2\in\mathcal{I}
\end{matrix}\right.\right\}$$
Then, the set of all feasible discrete tessellations is obtained as follows. For every $\bar{a}\in\set{T}$ we have 
$\set{G}(\bar{a})=\{i\in\set{I}:\bar{a}_{ii}=1\}$ and 
$$\set{V}(\set{G}(\bar{a}))=\big\{\set{V}_i=\{i\}\cup\{j\in\set{I}\vert \bar{a}_{ij}=1\} 
~\forall i\in\set{G}(\bar{a}) \big\}$$
Thus, we can rewrite problem \eqref{eq:max_partition} as the following integer program
\begin{equation}\label{eq:max_partition_ref}
    \max_{a\in\set{T}}R(a)
\end{equation}
where (with a slight abuse of notation) $R:\set{T}\to \R$ is defined as $R(a):=R(\set{V}(\set{G}(a)))$.


\section{Pricing Problem} \label{sec: pricing problem}
In this section, discrete tessellations of the type introduced in \Cref{sec:zonification} are used to generate pricing zones for a carsharing service. The pricing problem can be thus summarized as that of finding a discrete tessellation of the carsharing stations and assigning prices between any pair of subsets (henceforth zones). The measure of performance $R(\set{V}(\set{G}))$ is now understood as a measure of business performance (e.g., profits) generated by the rentals occurred as a consequence of the defined zones and prices. 


We assume that the service provider may choose the price to apply between any pair of zones from a discrete set. This is consistent, for example, with the mechanism proposed by \citet{soppert2022differentiated,Muller23} where the per-minute price is selected from a finite set of prices, or by \citet{pantuso2022exact} where a drop-off fee is selected from a finite set of prices.
We denote the set of price levels by $\set{L}$. Binary variables $\lambda_{ijl}$ take value $1$ if price $l \in \set{L}$ is applied between the zones generated by $i\in\set{I}$ and $j\in\set{I}$ (if $i$ and $j$ are chosen as generators), $0$ otherwise. 
Following, we define decision variables $\alpha_{ijl}$ to take value $1$ if price level $l$ is adopted between stations $i\in\set{I}$ and $j\in\set{I}$, $0$ otherwise. 
Let $\alpha:=(\alpha_{ijl})_{i,j \in \mathcal{I}, l \in \mathcal{L}}$ and $\lambda:=(\lambda_{ijl})_{i,j \in \mathcal{I}, l \in \mathcal{L}}$.
The pricing problem can thus be expressed as follows:
\begin{subequations}\label{eq:pricing_problem}
\begin{align}
 \max~& Q(a,\lambda,\alpha)\\
  \text{s.t. }~ & \sum_{l \in \mathcal{L}} \lambda_{ijl} \geq a_{ii} + a_{jj} - 1, &\forall i, j \in \mathcal{I} \label{zpc1}\\
& \sum_{l \in \mathcal{L}} \lambda_{ijl} \leq a_{ii}, &\forall  i, j \in \mathcal{I} \label{zpc2}\\
& \sum_{l \in \mathcal{L}} \lambda_{ijl} \leq a_{jj}, &\forall  i, j \in \mathcal{I} \label{zpc3}\\
& a_{i_1,j_1} + a_{i_2,j_2} + \lambda_{j_1,j_2,l} \leq \alpha_{i_1,i_2,l} + 2, &\forall i_1, i_2, j_1, j_2 \in \mathcal{I}, \forall l \in \mathcal{L} \label{spc1}\\
& \sum_{l \in \mathcal{L}} \alpha_{ijl} = 1, &\forall i,j \in \mathcal{I} \label{spc2}\\
&a\in\set{T}& \label{eq:pricing_problem:tass}\\
&\lambda,\alpha\in \{0,1\}^{\vert\mathcal{I}\vert\times\vert\mathcal{I}\vert\times \vert\mathcal{L}\vert}\label{eq:pricing_problem:range}
\end{align}
\end{subequations}
The function $Q(a,\lambda,\alpha)$ represents the performance (e.g., profit or service rate) obtained by the rentals occurred as a consequence of the prices set between each pair of stations. This function is general and can adapt to the specific configuration of the carsharing service. A possible specification will be provided in \Cref{sec:customer_choices}.
Constraints \eqref{zpc1} state that if both $i$ and $j$ are designated as generators of a zone, then a price level must be assigned between the zones they generate. Constraints \eqref{zpc2} and \eqref{zpc3} ensure that at most one price level is chosen between any pair of zones, and no price level is chosen if either $i$ or $j$ are not designated as generators. Constraints \eqref{spc1} ensure that, if station $i_1$ is assigned to zone $j_1$ and station $i_2$ is assigned to zone $j_2$, then the price level between zones $j_1$ and $j_2$ applies to stations $i_1$ and $i_2$. Constraints \eqref{spc2} ensure that exactly one price level is applied between each pair of stations. Finally, constraints \eqref{eq:pricing_problem:tass} ensure that the $a$ variables define a discrete tessellation. 
Observe that problem \eqref{eq:pricing_problem} is, in general, a nonlinear MIP problem.

\section{Tailored Integer Benders Decomposition} \label{sec: tailored benders decomposition approach}
We propose a tailored integer Benders decomposition to obtain exact solutions to problem \eqref{eq:pricing_problem}. 
The method is based on the following assumption.
\begin{itemize}
    \item[\textbf{A1}] $Q(a,\lambda,\alpha)$ can be computed for all $a,\lambda$ and $\alpha$. 
\end{itemize}
Observe that we do not make any restriction regarding the functional form of $Q(a,\lambda,\alpha)$. The method is thus general in the specification of the performance measure. In our experiments in \Cref{sec: computational experiments} we compute $Q(a,\lambda,\alpha)$ by solving a MILP problem.

We start by reformulating problem \eqref{eq:pricing_problem} as follows:
\begin{align}
        \max_{a \in \set{T}, \lambda, \alpha \in \{0,1\}^{|\mathcal{I}| \times |\mathcal{I}| \times |\mathcal{L}|},\phi}\left\{\phi \vert \phi \leq Q(a,\lambda,\alpha), \eqref{zpc1} - \eqref{spc2}\right\} \label{eq:mp_formulation}
\end{align}
This, in turn, allows us to relax constraints $\phi \leq Q(a,\lambda,\alpha)$ and work with the following \textit{relaxed master problem} (RMP):
\begin{align}\label{eq:RMP}
        \max_{a \in \set{T}, \lambda, \alpha \in \{0,1\}^{|\mathcal{I}| \times |\mathcal{I}| \times |\mathcal{L}|},\phi}\left\{\phi \vert \eqref{zpc1} - \eqref{spc2}\right\} \tag{RMP}
\end{align}

In \eqref{eq:RMP}, $\phi$ overestimates the value of $Q(a, \lambda,\alpha)$. The overestimation is corrected by means of the addition of optimality cuts. We devise integer optimality cuts of the type introduced in \citep{laporte1993integer}. 
Let $U$ be a constant such that 
$$\infty>U\geq \max_{a \in \set{T}, \lambda, \alpha \in \{0,1\}^{|\mathcal{I}| \times |\mathcal{I}| \times |\mathcal{L}|}}Q(a, \lambda,\alpha)$$
Given the $k$-th feasible solution $(a^k, \lambda^k,\alpha^k)$, let $\mathcal{A}_k^+ \subseteq \mathcal{I}\times \mathcal{I}$ and $\mathcal{A}_k^- \subseteq  \mathcal{I}\times\mathcal{I}$ be the sets containing the $(i,j)$ pairs such that $a_{ij}^k=1$ and $a_{ij}^k=0$, respectively. 
Similarly, let $\Lambda^+_k \subseteq \mathcal{I} \times \mathcal{I} \times \mathcal{L}$ and $\Lambda^-_k \subseteq \mathcal{I} \times \mathcal{I} \times \mathcal{L}$ be the sets of tuples $(i,j,l)$ for which $\lambda_{ijl}^k=1$ and $\lambda_{ijl}^k=0$, respectively. Finally, let $\Delta^+_k \subseteq \mathcal{I} \times \mathcal{I} \times \mathcal{L}$ and $\Delta^-_k \subseteq \mathcal{I} \times \mathcal{I} \times \mathcal{L}$ be the sets of tuples $(i,j,l)$ for which $\alpha_{ijl}^k=1$ and $\alpha_{ijl}^k=0$, respectively. \Cref{prop_Optcut} defines the optimality cuts.

\begin{proposition}\label{prop_Optcut}
Let $(a^k, \lambda^k,\alpha^k)$ be the $k$-th feasible solution to constraints \eqref{zpc1} - \eqref{eq:pricing_problem:range}. The set of cuts
\begin{align}
\begin{split}
    \phi \leq \big(Q(a^k, \lambda^k,\alpha^k) - U\big)\bigg(\sum_{(i,j) \in \mathcal{A}_k^+}a_{ij} - \sum_{(i,j) \in \mathcal{A}_k^-}a_{ij} + \sum_{(i,j,l) \in \Lambda_k^+} \lambda_{ijl} - \sum_{(i,j,l) \in \Lambda_k^-} \lambda_{ijl} \\+ \sum_{(i,j,l) \in \Delta_k^+} \alpha_{ijl} - \sum_{(i,j,l) \in \Delta_k^-} \alpha_{ijl}\bigg)
    -\big(Q(a^k, \lambda^k,\alpha^k) - U\big)\big(|\mathcal{A}^+_k| + |\Lambda^+_k| + |\Delta^+_k|- 1\big) + U
\end{split} \label{form_Optcut}
\end{align} 
defined for all feasible $(a^k, \lambda^k,\alpha^k)$ solutions to \eqref{zpc1} - \eqref{eq:pricing_problem:range} makes \eqref{eq:RMP} a reformulation of \eqref{eq:mp_formulation}.
\end{proposition}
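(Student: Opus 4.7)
The plan is to verify the two defining properties of a Laporte--Louveaux style integer optimality cut: tightness at the generating solution, and collapse to the trivial bound $\phi\leq U$ at every other binary feasible point. Once both are established, adding the family \eqref{form_Optcut} indexed by all feasible $(a^k,\lambda^k,\alpha^k)$ enforces $\phi\leq Q(a,\lambda,\alpha)$ at every feasible point of \eqref{eq:RMP}, which is precisely the content relaxed from \eqref{eq:mp_formulation}.

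First I would substitute $(a^k,\lambda^k,\alpha^k)$ itself into the right-hand side of \eqref{form_Optcut}. Because the sets $\mathcal{A}_k^+$ and $\mathcal{A}_k^-$ partition $\mathcal{I}\times\mathcal{I}$ (and analogously for $\Lambda_k^\pm$ and $\Delta_k^\pm$) according to the value of the corresponding binary entry, each positively signed sum collapses to the cardinality $|\mathcal{A}_k^+|$, $|\Lambda_k^+|$, $|\Delta_k^+|$, while each negatively signed sum vanishes. The bracketed expression therefore equals $|\mathcal{A}_k^+|+|\Lambda_k^+|+|\Delta_k^+|$, and a short algebraic simplification reduces the cut to $\phi\leq Q(a^k,\lambda^k,\alpha^k)$, confirming tightness at the generator.

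Next I would consider any other binary solution $(a,\lambda,\alpha)\neq(a^k,\lambda^k,\alpha^k)$ that is feasible for \eqref{zpc1}--\eqref{eq:pricing_problem:range}. The key combinatorial observation is that the same bracketed expression is then bounded above by $|\mathcal{A}_k^+|+|\Lambda_k^+|+|\Delta_k^+|-1$, because the plus/minus sets partition every coordinate index and a single disagreement with $(a^k,\lambda^k,\alpha^k)$ already forces a strict unit drop in the sum. Since $Q(a^k,\lambda^k,\alpha^k)-U\leq 0$ by the choice of $U$, multiplying by this nonpositive coefficient reverses the direction of the inequality, and the cut simplifies to $\phi\leq U$. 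This is implied by the definition of $U$ and is therefore vacuous at every binary feasible point distinct from the generator.

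Combining the two properties delivers the reformulation claim: any feasible binary $(a,\lambda,\alpha,\phi)$ of \eqref{eq:RMP} augmented with all cuts \eqref{form_Optcut} must satisfy $\phi\leq Q(a,\lambda,\alpha)$ via the cut generated by its own $(a,\lambda,\alpha)$, while any feasible point of \eqref{eq:mp_formulation} satisfies every cut, since each one is either tight at its own generator (and $\phi\leq Q$ holds there by assumption) or collapses to the valid bound $\phi\leq U$. The main bookkeeping hurdle I anticipate is verifying the partition identities $\mathcal{A}_k^+\cup\mathcal{A}_k^-=\mathcal{I}\times\mathcal{I}$, $\Lambda_k^+\cup\Lambda_k^-=\mathcal{I}\times\mathcal{I}\times\mathcal{L}$, and $\Delta_k^+\cup\Delta_k^-=\mathcal{I}\times\mathcal{I}\times\mathcal{L}$ carefully enough that any single coordinate mismatch genuinely produces the required unit drop in the bracketed expression; once this is in hand, the proposition becomes a direct specialization of the classical integer L-shaped optimality cut of \citet{laporte1993integer}.
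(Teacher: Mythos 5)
Your proof is correct and takes essentially the same route as the paper's: tightness of the cut at the generating solution (the bracketed term collapsing to $\lvert\mathcal{A}_k^+\rvert+\lvert\Lambda_k^+\rvert+\lvert\Delta_k^+\rvert$ so that the cut reduces to $\phi\leq Q(a^k,\lambda^k,\alpha^k)$), and, at any other binary feasible point, a unit drop in the bracket combined with the nonpositive coefficient $Q(a^k,\lambda^k,\alpha^k)-U$ making the cut a redundant bound of at least $U$. The paper's version is terser and additionally remarks that binariness of $(a,\lambda,\alpha)$ makes the cut family finite, but your argument, including the partition bookkeeping on $\mathcal{A}_k^{\pm}$, $\Lambda_k^{\pm}$, $\Delta_k^{\pm}$, matches it step for step.
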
 
\begin{proof}{Proof of Proposition \ref{prop_Optcut}.}
Observe that, since $a$, $\lambda$ and $\alpha$ are binary, the number of feasible solutions to \eqref{zpc1} - \eqref{eq:pricing_problem:range} is finite, and so is the number of cuts \eqref{form_Optcut}. 
Then, it is easy to see that when $(a, \lambda,\alpha)$ is the $k$-th feasible solution the quantity 
$$\bigg(\sum_{(i,j) \in \mathcal{A}_k^+}a_{ij} - \sum_{(i,j) \in \mathcal{A}_k^-}a_{ij} + \sum_{(i,j,l) \in \Lambda_k^+} \lambda_{ijl} - \sum_{(i,j,l) \in \Lambda_k^-} \lambda_{ijl} + \sum_{(i,j,l) \in \Delta_k^+} \alpha_{ijl} - \sum_{(i,j,l) \in \Delta_k^-} \alpha_{ijl}\bigg)$$
reduces to 
$$|\mathcal{A}^+_k| + |\Lambda^+_k| + |\Delta^+_k|$$
and \eqref{form_Optcut} reduces to 
\begin{align*}
    \phi \leq Q(a^k, \lambda^k,\alpha^k)
\end{align*}
Otherwise it reduces to a generally valid upper bound larger than $U$. 
\Halmos
\end{proof}
Thus, optimality cuts \eqref{form_Optcut} are violated by (and thus cut off) solutions $(a^k,\lambda^k,\alpha^k,\phi^k)$ for which $\phi^k> Q(a^k,\lambda^k,\alpha^k)$. They are however redundant, and thus safe, for solutions other than $(a^k,\lambda^k,\alpha^k,\phi)$ for any $\phi$. Convergence of the algorithm is then granted by the existence of only finitely many such cuts.

\subsection{Improvements} \label{sec: better formulations}
In what follows we discuss a number of efficiency measures aimed to improve the integer Benders decomposition method developed. 

\subsubsection{Reformulation of $\set{T}$}\label{sec:reformulation}
The $\mathcal{O}(\vert\mathcal{I}\vert^3)$ formulation of $\set{T}$ can be improved by rewriting some constraints and identifying redundant ones. In particular, consider constraints 
$$d(i,j_1)a_{i,j_1} \leq d(i,j_2)a_{j_2,j_2}+ d(i,j_1) (1-a_{j_2,j_2}) \quad \forall i,j_1,j_2\in\set{I}$$
We can scale down both sides by $d(i,j_1)$. That is:
 \begin{align}
    & d(i,j_1)a_{i,j_1} \leq d(i,j_2)a_{j_2,j_2}+ d(i,j_1) (1-a_{j_2,j_2}) &\forall i,j_1,j_2\in\mathcal{I} \nonumber\\
    & \iff a_{i,j_1} \leq \bigg( \frac{d(i,j_2)}{d(i,j_1)} -1 \bigg)a_{j_2,j_2} + 1 &\forall i,j_1,j_2\in\mathcal{I} \label{eq:transformed_constr}
\end{align}   
Observe that, when $d(i,j_1) \leq d(i,j_2)$, the right-hand side of constraints \eqref{eq:transformed_constr} is always larger than $1$, making constraints \eqref{eq:transformed_constr} redundant.
When $d(i,j_1) > d(i,j_2)$, the coefficient of $a_{j_2,j_2}$ can be rounded down to $-1$. Therefore, we obtain the following reformulation of $\set{T}$:
$$\set{T} :=
\left\{a\in \{0,1\}^{|\mathcal{I}| \times |\mathcal{I}|}\left\lvert\begin{matrix}\sum_{j\in\mathcal{I}}a_{ii} = S \\
\sum_{j\in\mathcal{I}}a_{ij} =1 &\forall i\in\mathcal{I}\\
a_{ij}\leq a_{jj}, &\forall i,j\in\mathcal{I}\\
a_{i,j_1} \leq 1 - a_{j_2,j_2}, &\forall i,j_1,j_2\in\mathcal{I}: d(i,j_1) > d(i,j_2)
\end{matrix}\right.\right\}$$

\subsubsection{Valid inequalities}\label{sec: VIs}
Valid inequalities can be obtained by examining the distance between each pair of stations. Particularly, for each $i \in \set{I}$, let $j^{(n)}_i\in\set{I}$ with $n=1,\ldots,\vert\set{I}\vert-1$ be the index of the $n$-th order statistic of the distance from $i$, that is $d(i,j^{(1)}_i)\leq d(i,j^{(2)}_i)\leq \cdots\leq d(i,j^{(\vert\set{I}\vert-1)}_i)$. Define, for all $i\in\set{I}$, the set 
$$\set{J}_i:=\left\{j^{(|\set{I}|-S+1)}_i,\ldots,j^{(|\set{I}|-1)}_i\right\}$$
Hence, $\set{J}_{i}$ contains the indices of the $S-1$ stations further from $i$.
It is easy to see that a partition of type \eqref{def:partition} in $S$ zones is only possible if the following valid inequalities are respected
\begin{align}\label{eq:vi:furthestzones}
    &\sum_{j\in \set{J}_i} a_{ji} = 0 &\forall i \in \mathcal{I}
\end{align}

Additional valid inequalities can be obtained using upper-bounds on the performance measure $Q$ for each possible set of generators $\mathcal{G} \in \mathcal{G}^S$. 
Let $\overline{P}_{\mathcal{G}}$ represent the performance upper bound for the tessellation induced by $\set{G}$, i.e., when $a_{ii}=1$ for all $i\in\set{G}$. To compute $\overline{P}_{\mathcal{G}}$ observe that, given $\set{G}$, the allocation of stations to zones (i.e., a specification of variables $a$) can be found in $\mathcal{O}(\vert\set{I}\vert^2)$ operations which assign each station to the closest generator $i\in\set{G}$. 
In \Cref{sec:further_efficiency} we explain how upper bound $\overline{P}_{\set{G}}$ can be computed for our specification of $Q(a,\lambda,\alpha)$.

Given these profit upper bounds we can add the valid inequality as follows.
\begin{align}\label{eq:VI_zone}
    \phi \leq \overline{P}_\mathcal{G} + \left( U - \overline{P}_{\mathcal{G}}\right) (S- \sum_{j \in \mathcal{G}} a_{jj}) ~~~~\forall \mathcal{G} \in \mathcal{G}^S
\end{align}
Observe that, when a given $\set{G}$ of generators is enforced we have $\sum_{j \in \mathcal{G}} a_{jj} = S$. Thus $\phi$ is bounded by the corresponding $\overline{P}_{\mathcal{G}}$. When $\set{G}$ is not enforced we have $\sum_{j \in \mathcal{G}} a_{jj} < S$ and the left-hand side of \eqref{eq:VI_zone} reduces to a value that is greater than $U$. 
Finally, observe that it is possible to generate up to $\binom{\vert\set{I}\vert}{S}$ valid inequalities \eqref{eq:VI_zone}.

\section{Computational Experiments} \label{sec: computational experiments}
We perform experiments on instances based on a real carsharing service in the city of Copenhagen, Denmark. The scope of the experiments is twofold. First, we provide empirical evidence on the performance of the decomposition method described in \Cref{sec: tailored benders decomposition approach}. Second, we illustrate the effect of joint zonification and pricing decisions. 

In the remainder of this section, we begin with \Cref{sec:customer_choices} by explaining how we model performance $Q(a,\lambda,\alpha)$ as the rental profits occurred as a consequence of zonification and pricing decisions. In \Cref{sec:further_efficiency} we provide additional efficiency measures for accelerating the integer Benders decomposition based on the specific model of $Q(a,\lambda,\alpha)$. Following, in \cref{sec: instances} we introduce the instances we solve. Finally, in \Cref{sec: computational performance} we provide empirical evidence on the performance of the solution method and in \Cref{sec: result analysis} we discuss the impact of joint zonification and pricing decisions.

\subsection{Model of rental profits}\label{sec:customer_choices}

Given trip prices between any pair of stations (a decision $\alpha$), customers choose a transport mode between their origin and destination. To model rental activities in response to pricing decisions, and hence profits, we follow the recipes provided by \citet{zheng2023many} and \citet{pantuso2022exact}.

According to \citet{zheng2023many} customers choose a transportation mode by minimizing a generalized transport cost which is derived from travel time and travel fares. Particularly, the \textit{Value of Time} (VOT) is used to convert travel time into an equivalent amount of money. VOT can vary across customers and transport mode, see e.g., \citet{rossetti2023commuter}. In general, it is higher for the time spent on walking and waiting than that for the time spent on a given transport means. Let $\set{M}$ be the set of available transport modes (e.g., public transit, carsharing and taxi). For simplicity, let $m_0\in\set{M}$ be the carsharing mode. Let $\set{K}$ be the set of customers.
Let $\mu_{km}^V$ be the in-vehicle VOT of customer $k$ taking mode $m$ and $\mu_{k}^W$ be the walking-and-waiting VOT of customer $k$, respectively. Correspondingly, the travel time of customer $k$ with mode $m$ is divided into in-vehicle time indicated by $T_{km}^V$ and walking-and-waiting time indicated by $T_{km}^W$. The walking-and-waiting time includes the time to reach the transport mode (e.g., station or carsharing station), to commute and possibly wait for the connection, and to reach the final destination (e.g., walking from the station to the final destination). The generalized transport cost can be expressed as
\begin{equation*}
\label{travel_cost_function}
    c_{km} = p_{km} + \mu_{km}^V T_{km}^V + \mu_{k}^W T_{km}^W, ~~\forall k \in \set{K}, \forall m \in \set{M}
\end{equation*}
Here $p_{km}$ is the transport fare associated with each customer $k\in\set{K}$ by taking mode $m \in \set{M}$. For each $k\in\set{K}$ this is given as 
\begin{align*}
p_{km}= \left\{
\begin{aligned}
&P_{km},  & \forall m \in \mathcal{M} \setminus \{m_0\},\\
&P^{CS} T_{km}^V + \sum_{l\in\set{L}}L_l\alpha_{i(k),j(k),l}, &  m = m_0.
\end{aligned} 
\right.
\end{align*}
where, for all modes other than carsharing $P_{km}$ is a fixed known fare. For carsharing ($m=m_0$), the first term of the fare is the per-minute fee $P^{CS}$ paid for the duration of the ride $T_{km}^V$, while the second term is the drop-off fee applied between the customer's origin and destination station, $i(k)$ and $j(k)$, respectively, where $L_l$ is the fee at level $l\in\set{L}$. 

To model the allocation of shared cars to customers, we use the model provided by  \cite{pantuso2022exact}. This assumes that cars are assigned to customers on a first-come-first-served principle. We begin with identifying the set of potential carsharing customers. These are the customers for which there exists a price level $l\in\set{L}$ which makes their VOT for the carsharing service be the smallest among the transport modes. These customers are henceforth named \textit{requests}. The set of requests is defined as follows
\begin{align*}
   \set{R}=\left\{k\in\set{K}: \exists l \in \set{L}, ~ s.t. ~ c_{km_0}  \leq c_{km}, \forall m \in \set{M} \setminus \{m_0\}\right\}
\end{align*}
For each $r\in\set{R}$ we let $i(r)$ and $j(r)$ be the origin and destination station, respectively, of the customer denoted by $k(r)$, and $l(r)$ the highest acceptable price level. 
Furthermore, we let $\set{L}_r = \{l \in \set{L}: L_l \leq L_{l(r)}\}$ denote the set of all acceptable price levels for request $r$.

Given a set of requests, the allocation of vehicles to requests is done according to a first-come-first-served principle. For each request $r\in\set{R}$, we let $\set{R}_{r} = \{q \in \mathcal{R}: i(q)=i(r), k(q)<k(r)\}$ be the set of requests from the same station which have priority over request $r$. We assume that the indices $k$ of the customers represent the order of arrival at the carsharing station (i.e., customer $k$ arrives before customer $k+1$). This is without loss of generality, as the ordering of the customers can be arbitrary and represent different priority relationships.

Let $\set{V}$ represent the set of available shared vehicles. For all $r \in \set{R}$, $v \in \set{V}$, and $l \in \set{L}$, binary variable $y_{vrl}$ indicates whether request $r$ is served by vehicle $v$ at pricing level $l$. Then, the profit obtained as a result of zonification and pricing decisions is obtained by solving the following MILP:
\begin{subequations}\label{eq:allocation}
\begin{align}
&Q(a,\lambda,\alpha) = \max\sum_{r \in \mathcal{R}} \sum_{v \in \mathcal{V}} \sum_{l \in \mathcal{L}_r} R_{rl}^N y_{vrl}  &\label{sp_obj}\\
&\sum_{v\in \mathcal{V}} \sum_{l \in \mathcal{L}_r} y_{vrl} \leq 1, &\forall r \in \mathcal{R}\label{con_yr}\\
&\sum_{r \in \mathcal{R}} \sum_{l \in \mathcal{L}_r} y_{vrl} \leq 1, &\forall v \in \mathcal{V} \label{con_yv}\\
& \sum_{v \in \mathcal{V}} y_{vrl} \leq \alpha_{i(r),j(r),l}, &\forall r \in \mathcal{R}, l \in \mathcal{L}_r \label{con_yl}\\
&\sum_{l \in \mathcal{L}_{r}}y_{vrl} + \sum_{r_1 \in \mathcal{R}_r} \sum_{l_1 \in \mathcal{L}_{r_1}} y_{v,{r_1},l} \leq G_{v,i({r})}, &\forall r \in \mathcal{R}, v \in \mathcal{V} \label{con_z}\\
& y_{vrl} + \sum_{r_1 \in \mathcal{R}_{r}} \sum_{l_1 \in \mathcal{L}_{r_1}} y_{v,r_1,l_1} + \sum_{v_1 \in \mathcal{V}\setminus\{v\}} y_{{v_1},r,l} \geq \alpha_{i(r),j(r),l} + G_{v,i(r)} - 1, &\forall r \in \mathcal{R}, v \in \mathcal{V}, l \in \mathcal{L}_{r} \label{con_l}\\
&y_{vrl}\in\{0,1\}^{\lvert\mathcal{V}\rvert \times \vert\mathcal{R}\vert \times \vert\mathcal{L}_{r}\vert} &
\end{align}
\end{subequations}
In objective function \eqref{sp_obj}, $R_{rl}^N$ represents the net revenue obtained from serving request $r$ at price level $l$. It is calculated as 
$$R^{N}_{rl} = c_{k(r),m_0} - C_{i(r), j(r)}^U = P^{CS} T_{k(r),m_0}^V + L_l - C_{i(r), j(r)}^U$$
with $C_{i(r), j(r)}^U$ indicating the operating cost born by the carsharing operator when a vehicle is rented between stations $i(r)$ and $j(r)$.
Constraints \eqref{con_yr} ensure that each request is satisfied at most once, while constraints \eqref{con_yv} ensure that each shared vehicle satisfies at most one request. 
Constraints \eqref{con_yl} state that a given request $r$ can be satisfied at a price level $l$ only if the same price level is set between its origin and destination. Constraints \eqref{con_z} ensure that a shared vehicle $v$ can be used to satisfy request $r$ only if it was not occupied by any other customers arriving at the same station earlier. Here, parameter $G_{vi}$ takes value $1$ if vehicle $v$ is at station $i$, 0, otherwise. 
Finally, constraints \eqref{con_l} will force $y_{vrl}$ to take value $1$ if (i) the vehicle $v$ has not been rented by any customer with higher priority (i.e., the second term on the left-hand side is equal to $0$), (ii) the request $r$ has not been satisfied by any other shared vehicles (i.e., the third term on the left-hand side is equal to $0$), (iii) the price level $l$ is offered to all trips between stations $i(r)$ and $j(r)$ (i.e., $\alpha_{i(r),j(r),l} = 1$), and (iv) vehicle $v$ is available at station $i(r)$ (i.e., $G_{v,i(r)} = 1$). 

Observe that problem \eqref{eq:allocation} is always feasible. Its optimal solution can be found in $\mathcal{O}(|\mathcal{R}||\mathcal{V}|)$ operations by the algorithm provided in \Cref{app:alg}. Given this specification of $Q(a, \lambda, \alpha)$, model \eqref{eq:pricing_problem} becomes a MILP. Its extensive formulation is provided in \Cref{app: MILP form}.

\subsection{Further efficiency measures}\label{sec:further_efficiency}
In our implementation of the integer Benders decomposition algorithm, \eqref{eq:RMP} is solved in  a branch-and-bound framework where optimality cuts \eqref{form_Optcut} are identified and added upon reaching integer feasible nodes in the tree. 

Valid inequalities \eqref{eq:vi:furthestzones} and \eqref{eq:VI_zone} are added statically to \eqref{eq:RMP} already at the root node. For valid inequality \eqref{eq:VI_zone} we compute the necessary upper bounds $\overline{P}_{\mathcal{G}}$ as follows. Let $\set{V}(\set{G})$ be the tessellation induced by $\set{G}$, see \Cref{def:partition}. 
Observe that, given our definition of $R^N_{rl}$ in \Cref{sec:customer_choices}, the profit for each request $r$ depends solely on the origin and destination of the request, $i(r)$, $j(r)$, as well as on $l$. That is, it is independent on the customers. Let us refer to $R^N_{rl}$ as $R^N_{ijl}$. Then, for each pair of zones $\mathcal{V}, \mathcal{U} \in \mathcal{V(G)}$ and for each price level $l \in \set{L}$, an upper bound on the profit between such zones at price level $l$ can be computed as follows
\begin{align*}
    \sum_{(i,j)\in \set{V}\times \set{U}}R^N_{ijl}\min \bigg\{\big\vert \{r\in \set{R}: i(r)=i, j(r)=j, l(r)\geq l\}\big\vert, \big\vert \mathcal{V}_i\big\vert \bigg\}
\end{align*}
That is, the upper bound is given by the profit obtained by all requests going from stations in $\set{V}$ to stations in $\set{U}$, provided that there are sufficient vehicles at the origin station (i.e., constrained by the number of vehicles $\vert \set{V}_i \vert$). 
Then, for each pair of zones $(\mathcal{V}, \mathcal{U})$, we identify the price level which gives the highest profit upper bound as 
$$l_{\mathcal{V},\mathcal{U}}^{*}=\argmax_{l\in\set{L}}\left\{\sum_{(i,j)\in \set{V}\times \set{U}}R^N_{ijl}\min \bigg\{\big\vert \{r\in \set{R}: i(r)=i, j(r)=j, l(r)\geq l\}\big\vert, \big \vert \mathcal{V}_i\big\vert \bigg\}\right\}$$
Then $\overline{P}_\mathcal{G}$ is obtained by summing up the profit upper bounds from all pairs of zones as follows.
    \begin{align*}
        \overline{P}_{\mathcal{G}} = \sum_{\mathcal{V}, \mathcal{U} \in \mathcal{V(G)}} \sum_{(i,j)\in \set{V}\times \set{U}}R^N_{ijl_{\mathcal{V},\mathcal{U}}^{*}}\min \bigg\{\left\lvert \{r\in \set{R}: i(r)=i, j(r)=j, l(r)\geq l_{\mathcal{V},\mathcal{U}}^{*}\}\right\rvert, \left\vert \mathcal{V}_i\right\vert \bigg\}
    \end{align*}

In addition, we add classical duality-based Benders decomposition cuts \eqref{Relax_Cut} generated from the LP relaxation of problem \eqref{eq:allocation}.
\begin{align}
    \phi \leq &\sum_{r \in \mathcal{R}} \pi_{rk}^A + \sum_{v \in \mathcal{V}} \pi_{vk}^B \nonumber + \sum_{r \in \mathcal{R}} \sum_{l \in \mathcal{L}_r} \alpha_{i(r),j(r),l} \pi_{rlk}^C \nonumber + \sum_{r \in \mathcal{R}} \sum_{v \in \mathcal{V}} G_{v,i(r)} \pi_{vrk}^D \nonumber\\
       &+\sum_{v \in \mathcal{V}} \sum_{r \in \mathcal{R}} \sum_{l \in \mathcal{L}_r} (\alpha_{i(r),j(r),l} + G_{v,i(r)} -1 ) \pi_{vrlk}^E\label{Relax_Cut}
\end{align}
where, $\pi_{rk}^A$, $\pi_{vk}^B$, $\pi_{rlk}^C$, $\pi_{vrk}^D$, and $\pi_{vrlk}^E$ are the optimal dual solutions at iteration $k$ corresponding to constraints \eqref{con_yr}, \eqref{con_yv}, \eqref{con_yl}, \eqref{con_z}, and \eqref{con_l}, respectively.
Relaxation cuts define a non-trivial upper bound on $Q(a, \lambda,\alpha)$. 
They are generated and added to \eqref{eq:RMP} once an integer feasible solution $(a^k, \lambda^k,\alpha^k,\phi^k )$ to \eqref{eq:RMP} violates the optimality condition $\phi^k \leq Q(a^k, \lambda^k,\alpha^k)$. However, to prevent excessive growth of the size of \eqref{eq:RMP} and avoid solving an excessive number of LPs, we control the frequency at which relaxation cuts are added. 
Let $S_{min}$ denote the least number of zones and $K_{min}$ the least number of customers in our instances. Let $\sigma_{min}$ denote the frequency at which relaxation cuts in the instances with $S_{min}$ and $K_{min}$. This entails that we only add relaxation cuts every $\sigma_{min}$ integer-feasible nodes visited in the branch-and-bound tree.
Let $S_{max}$ and $K_{max}$ the largest number of zones and customers, respectively, in our instances. For a given instance $n$, let $S_n$ and $K_n$ be the number of zones and customers, respectively. The frequency is adjusted according to \eqref{adaptive frequency}.
\begin{align}
\label{adaptive frequency}
    \sigma_n = \sigma_{min} \left(1 + \frac{S_n - S_{min}}{S_{max} - S_{min}}\right) \left(1 + \frac{K_n - K_{min}}{K_{max} - K_{min}}\right) \gamma
\end{align}
Equation \eqref{adaptive frequency} ensures that with each unit increment in problem size, either in terms of the number of zones or customers, the value of $\sigma_n$ is multiplied by the corresponding increase rate. Parameter $\gamma$ controls the rate of growth as the problem size increases. We set $\gamma$ to $0.8$ and $\sigma_{min}$ to $20$.

Finally, since in our instances the fleet is homogeneous, a valid upper bound $U$ for the expression of the optimality cuts \eqref{form_Optcut} can be set as 
\begin{align*}
    U = \sum_{r \in \mathcal{R}} \max\{R_{r, l(r)}^N, 0\}
\end{align*}
It entails that all the requests contributing to a positive revenue are served, and only those, and that these are served at the respective highest price level.

\subsection{Instances} \label{sec: instances}
We build instances based on a real-world carsharing service operating in Copenhagen, Denmark. The instances comprise the twenty stations illustrated in \Cref{fig:cop_css}. 
The transport modes available in the city (set $\set{M}$) comprise public transport (a service offering busses, metro and superficial trains) and taxi, in addition to carsharing. 

\begin{figure}[ht]
    \centering
    \includegraphics[width=0.5\linewidth]{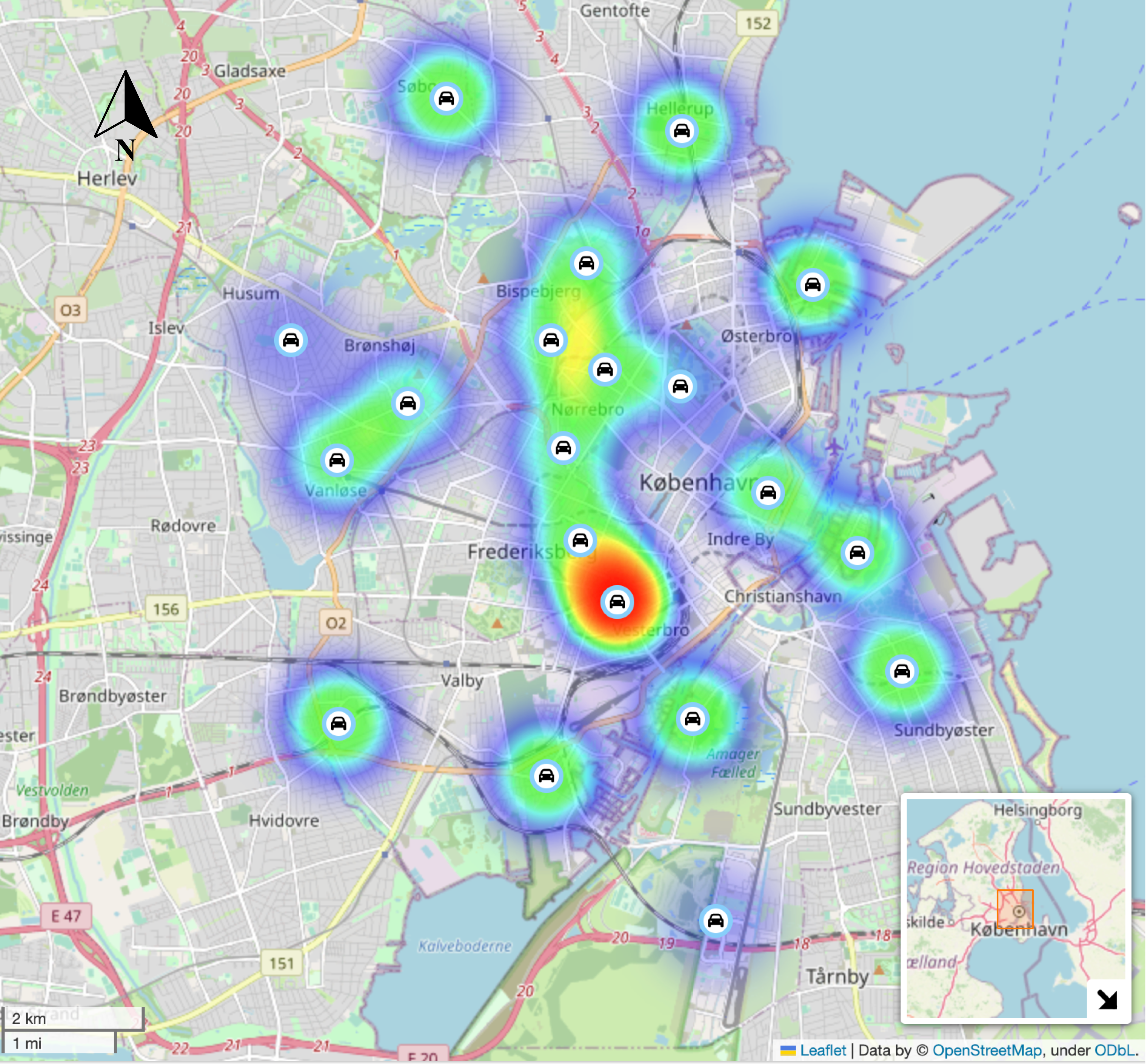}
    \caption{Location of carsharing stations and distribution of POIs.}
    \label{fig:cop_css}
\end{figure}

For the carsharing service we set five drop-off fees, ranging from Euro $-1$ to $3$ with Euro $1$ intervals. The per-minute fee is set at Euro $0.30$ consistently with ongoing practice. The tariffs for taxis and public transit are likewise set according to current tariffs in Copenhagen. These are summarized in \Cref{tab:para}. 

Customers are created based on the points of interest (POIs) located in an area of radius 800 meters centered around each station. POIs are locations such as schools, hospitals and museums and we assume they represent the origins and destinations of the customers. \Cref{fig:cop_css} provides a heatmap of the POIs considered, thus of the origins and destinations of the customers. Particularly, each customer is created by randomly sampling, without replication, an origin and destination pair from the POIs.

For any two POIs, we obtain real-world walking times and travel times and for the different transport modes using Google Maps APIs. 
For the carsharing mode, the waiting time is set to zero. 
For public transit, Google Maps APIs return the overall travel time including both walking-and-waiting time and in-vehicle time. Thus, we assume the walking-and-waiting time is uniformly distributed between $4$ and $15$ minutes and is subtracted from the overall travel time. This represents the time necessary to walk to the a nearby public transit station, to switch between public transit services (e.g., between bus and metro), to reach the next public transit station, and finally to walk to the destination POI. Finally, the waiting time for a taxi is uniformly distributed between $4$ and $8$ minutes while the walking time is zero.

\begin{table}[t]
    \footnotesize
    \caption{Transport prices.}
    \label{tab:para}
    \centering
    \begin{tabular}{l|l}
    \hline
    Parameter & Values\\
    \hline
    Carsharing drop-off fee & -1, 0, 1, 2, 3 Euro for each pricing level\\
    Carsharing per-minute fee & 0.30 Euro\\
    Taxi pick-up fee & 3.89 Euro\\
    Taxi per-minute fee & 2.55 Euro\\
    Public transit ticket fee & 3.22 Euro\\
    \hline
    \end{tabular}
    
\end{table}

VOTs depend on the transport mode. Following \citet{rossetti2023commuter}, we assume the VOTs are distributed according to a log-normal distribution with standard deviation Euro $0.4$ and different mean values. Particularly, the mean values are Euro $2.86$ for the in-shared-vehicle time, Euro $2.94$ for the in-vehicle time of other transport modes, and Euro $4.25$ for the walking-and-waiting time, see \Cref{fig:log_normal}.  Consequently, for each customer, we randomly sample a realization of the VOTs from the distributions above.

\begin{figure}[t]
    \centering
    \includegraphics[width=0.5\linewidth]{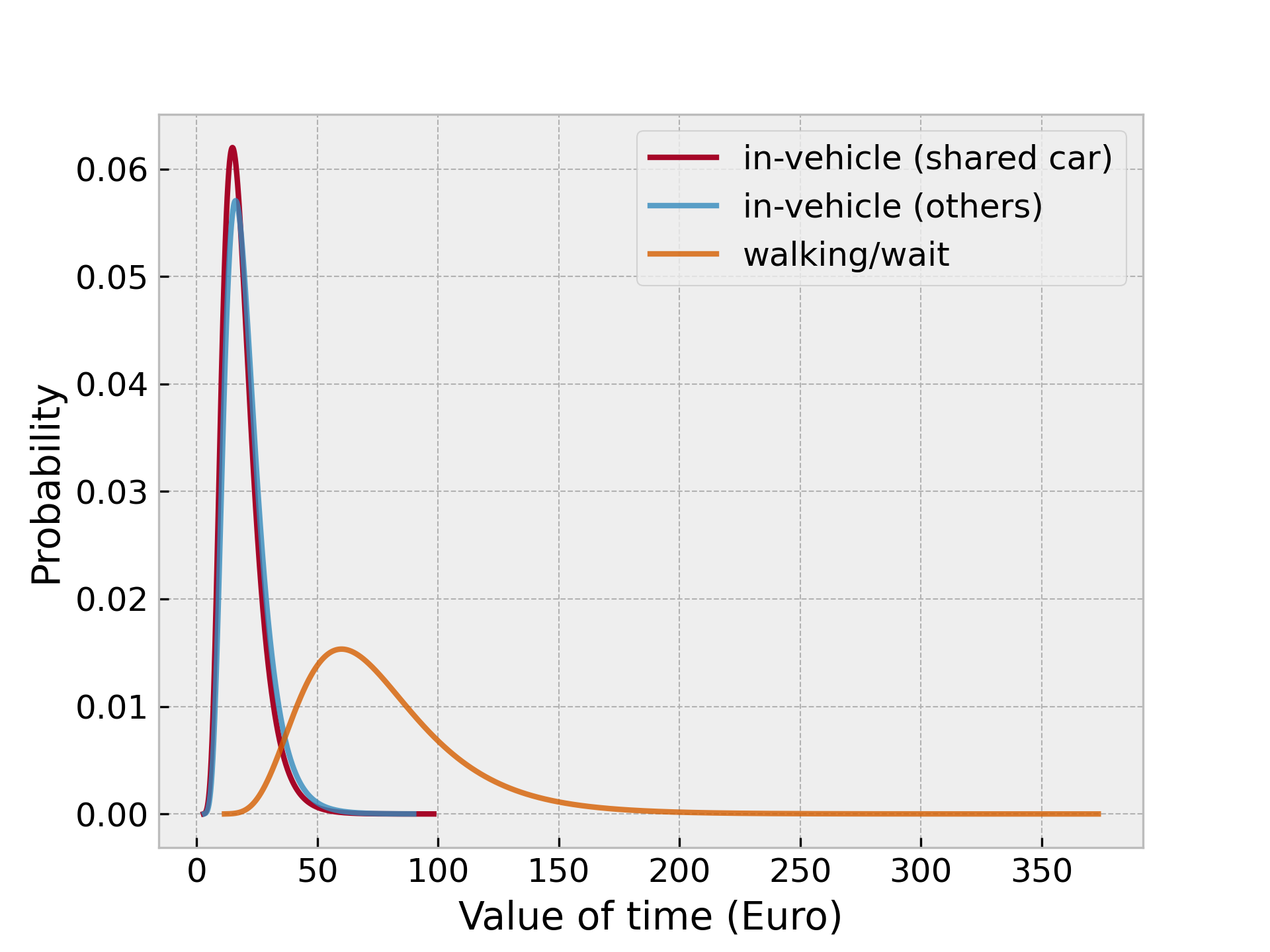}
    \caption{Log-normal distributions of VOT values for different transport modes.}
    \label{fig:log_normal}
\end{figure}

We build two groups of instances. The first group comprises small instances with $10$ stations. The second group comprises larger instances with $20$ stations.
Within each instance group we generate $27$ instances that vary in the number of zones, customers and shared vehicles. In both instance groups, we consider a number of zones $S\in\{ 3, 4, 5\}$. In order to conduct experiments with different ratios between the number of customers and the fleet size, small instances are generated by considering the following combinations of number of customers and number of vehicles, $(\vert\set{K}\vert, \vert\set{V}\vert) \in $ \big\{(100, 20), (100, 30), (100, 40), (200, 40), (200, 50), (200, 60), (300, 60), (300, 80), (300, 100)\big\}. Larger instances are likewise created from the following configurations $(\vert\set{K}\vert, \vert\set{V}\vert) \in$ \big\{(400, 100), (400, 150), (400, 200), (600, 150), (600, 200), (600, 300), (800, 200), (800, 300), (800, 400)\big\}. 

\subsection{Performance of the integer Benders decomposition} \label{sec: computational performance}
In this section we report on the performance of the integer Benders decomposition algorithm (BD in what follows). 
Such performance is compared to that of commercial solver Gurobi 9.5.1 (the solver, in what follows) employed to solve the extensive form of problem \eqref{eq:pricing_problem}. The extensive form of the problem is a MILP and is provided in \Cref{app: MILP form}. 
The BD algorithm is implemented in Python 3.9 using Gurobi's callable libraries. The extensive formulation is likewise solved using Gurobi's Python libraries. In all tests we set a target optimality gap of $0.5\%$, and a time limit of $1800$ seconds. 
All tests were executed on a computer equipped with an Apple M1 Pro processor and $16$ gigabytes of memory. 

\subsubsection{Performance on the small instances}\label{sec:results_small}
We begin by reporting on the results for the small instances (i.e., with $10$ carsharing stations). 
\Cref{tab:comp_small} reports, for each instance, the optimal objective values and the solution times with the solver, BD without the addition of valid inequalities (VIs), and BD with the addition of VIs. When using VIs, we add VIs \eqref{eq:vi:furthestzones} as well as \eqref{eq:VI_zone} for all $\binom{\vert\set{I}\vert}{S}$  possible sets of generators. In both BD with and without VIs we add relaxation cuts as explained in \Cref{sec:further_efficiency}. \Cref{tab:comp_small} does not report optimality gaps as all instances have been solved to an optimality gap below the target tolerance ($0.5\%$). 


\begin{table}[ht]
    \scriptsize
    \centering
    \caption{Objective values and solution times on the small instances.}
    \label{tab:comp_small}
    \begin{tabular}{ccccccccc}
    \toprule
         &&& & Objective Value & & \multicolumn{3}{c}{Time (seconds)}  \\
         \cmidrule{7-9}
         $\vert\set{K}\vert$ & $|\mathcal{V}|$ & $|\mathcal{I}|$ & $S$ & (Euro) & & Solver & BD without VIs & BD with VIs  \\
         \hline
         100 & 20 & 10 & 3 & 52.32 & & 2.70 & 17.87 & 7.69 \\
         100 & 30 & 10 & 3 & 57.30 & & 5.35 & 22.97 & 4.35 \\
         100 & 40 & 10 & 3 & 60.28 & & 6.21 & 11.97 & 3.25\\
         & & & & & \textbf{Average} & \textbf{4.75} & \textbf{17.59} & \textbf{5.10} \\
         200 & 40 & 10 & 3 & 85.56 & & 6.05 & 14.83 & 5.29 \\
         200 & 50 & 10 & 3 & 85.56 & & 6.12 & 33.61 & 8.84 \\
         200 & 60 & 10 & 3 & 90.28 & & 6.19 & 30.37 & 4.31 \\
         & & & & & \textbf{Average} & \textbf{6.12} & \textbf{26.27} & \textbf{6.14} \\
         300 & 60 & 10 & 3 & 133.58 & & 60.21 & 336.77 & 117.89 \\
         300 & 80 & 10 & 3 & 143.98 & & 61.67 & 352.35 & 70.25 \\
         300 & 100 & 10 & 3 & 149.52 & &32.47 & 49.49 & 38.33 \\
         & & & & & \textbf{Average} & \textbf{51.45} & \textbf{246.19} & \textbf{75.49} \\
         \cmidrule{6-9}
         & & & & & \textbf{Average} & \textbf{20.77} & \textbf{96.68} & \textbf{28.91}\\
         \midrule
         100 & 20 & 10 & 4 & 52.32 & & 1.60 & 18.12 & 11.68 \\
         100 & 30 & 10 & 4 & 60.76 & & 2.55 & 5.76 & 4.62 \\
         100 & 40 & 10 & 4 & 63.90 & & 2.13 & 4.20 & 3.18 \\
         & & & & & \textbf{Average} & \textbf{2.10} & \textbf{9.36} & \textbf{6.49} \\
         200 & 40 & 10 & 4 & 90.26 & & 1.86 & 5.65 & 5.45 \\
         200 & 50 & 10 & 4 & 90.26 & & 3.84 & 7.35 & 4.62 \\
         200 & 60 & 10 & 4 & 95.44 & & 5.00 & 4.51 & 3.45 \\
         & & & & & \textbf{Average} & \textbf{3.57} & \textbf{5.84} & \textbf{4.51} \\
         300 & 60 & 10 & 4 & 141.20 & & 66.96 & 321.30 & 191.64\\
         300 & 80 & 10 & 4 & 152.86 & & 35.12 & 291.15 & 93.42 \\
         300 & 100 & 10 & 4 & 158.98 & & 17.45 & 79.29 & 15.97\\
         & & & & & \textbf{Average} & \textbf{39.84} & \textbf{230.58} & \textbf{100.34} \\
         \cmidrule{6-9}
         & & & & & \textbf{Average} & \textbf{15.17} & \textbf{81.93} & \textbf{37.11} \\
         \midrule
         100 & 20 & 10 & 5 & 52.32 & & 1.42 & 10.51 & 15.55 \\
         100 & 30 & 10 & 5 & 62.76 & & 1.45 & 4.75 & 4.20 \\
         100 & 40 & 10 & 5 & 65.90 & & 1.52 & 3.05 & 1.79\\
         & & & & & \textbf{Average} & \textbf{1.46} & \textbf{6.11} & \textbf{7.18} \\
         200 & 40 & 10 & 5 & 90.48 & & 1.39 & 3.88 & 3.75 \\
         200 & 50 & 10 & 5 & 91.70 & & 1.26 & 3.13 & 3.51 \\
         200 & 60 & 10 & 5 & 96.88 & & 1.29 & 1.93 & 2.58 \\
         & & & & & \textbf{Average} & \textbf{1.32} & \textbf{2.98} & \textbf{3.28} \\
         300 & 60 & 10 & 5 & 145.82 & & 18.79 & 442.80 & 356.34 \\
         300 & 80 & 10 & 5 & 158.88 & & 10.80 & 220.63 & 189.26 \\
         300 & 100 & 10 & 5 & 167.74 & & 6.68 & 74.49 & 24.93 \\
         & & & & & \textbf{Average} & \textbf{12.09} & \textbf{245.97} & \textbf{190.18}\\
          \cmidrule{6-9}
         & & & & & \textbf{Average} & \textbf{4.96} & \textbf{85.02} & \textbf{66.88} \\
         \bottomrule
    \end{tabular}
    
\end{table}

We observe that the small instances are solved rather efficiently by the solver. The average computation time decreases as the number of zones increases. When increasing the number of zones we progressively relax the feasible region by including a larger number of partitions (observe that $\binom{\vert\set{I}\vert}{S}<\binom{\vert\set{I}\vert}{S+1}$ always holds in our instances and in general when $\vert\set{I}\vert>2S+1$). At the same time the number of decision variables and constraints is unaffected. The solver outperforms BD on all small instances. 

The performance of BD improves substantially with the addition of VIs. When using VIs the computation time is smaller in $23$ out of the $27$ and only marginally longer in the remaining $4$ instances. The average reduction of the solution time is substantial in different instance sizes and tends to increase as $S$ decreases. Particularly, when using VIs, the computation time decreases by $22.68$\% with $S=5$, $54.71$\% with $S=4$, and $70.10$\% with $S=3$.

Finally, we observe that the optimal objective value increases with $S$. This is due to the fact that the number of feasible solutions increases with $S$. 

\subsubsection{Performance on the large instances}\label{sec:results_large}
We focus now on the instances with $20$ stations. We compare the performance of the solver to that of BD. For BD we use VIs as they were proven effective on the small instances, see \Cref{sec:results_small}. \Cref{comp:large_instance_mean} reports the average optimality gaps aggregated by number of customers and number of zones.

We observe that on these instances BD significantly outperforms the solver. The average optimality gap of BD is $7.47$\% while that of the solver is $29.07$\%. The optimality gap reduction becomes more pronounced when $S$ decreases, thus when the feasible region is smaller. In particular, the optimality gap is decreased by $91.58\%$ with $S=3$, $70.03\%$ with $S=4$, and $52.45\%$ with $S=5$.
The decrease in the optimality gap is consistent across different numbers of customers.
Furthermore, BD successfully closes the optimality gap (within the target tolerance) in 6 out of the 27 instances, while none of these instances is solved to optimality by the solver. Particularly, we observe that BD typically significantly better on the instances with $S=3$, which appear to be particularly challenging for the solver (see also \Cref{tab:comp_small}).
\begin{table}[t]
    \scriptsize
    \caption{Average optimality gaps after $1800$ seconds. Optimality gaps are computed as $\vert$\texttt{best\_bound}-\texttt{objective\_value}$\vert$/$\vert$\texttt{objective\_value}$\vert$. The column ``Reduction'' reports the difference in optimality gap between BD and the solver, as a percentage. ``Instances solved'' reports the number of instances solved to an optimality gap smaller than the target $0.5\%$ tolerance.}
    \label{comp:large_instance_mean}
    \centering
    \begin{tabular}{ccccccccc}
    \toprule
        && & &\multicolumn{3}{c}{Gap (\%)} & \multicolumn{2}{c}{Instances solved}\\
        \cmidrule{5-9}
        $|\mathcal{K}|$ & $|\mathcal{I}|$ & $S$ & & Solver & BD & Reduction (\%) & Solver& BD\\
        \midrule
        400 & 20 & 3 & & 29.28 & 1.37 & 95.32 & 0/3 & 2/3\\
        600 & 20 & 3 & & 38.03 & 3.25 & 91.45 & 0/3 & 2/3\\
        800 & 20 & 3 & & 38.56 & 4.30 & 88.84 & 0/3 & 1/3\\
        \cmidrule{4-9}
        & & & \textbf{Average} & \textbf{35.29} & \textbf{2.97} & \textbf{91.58} & \textbf{0/9} & \textbf{5/9}\\
        \midrule
        400 & 20 & 4 & & 21.81 & 6.81 & 68.78 & 0/3 & 1/3\\
        600 & 20 & 4 & & 30.70 & 10.22 & 66.71 & 0/3 & 0/3\\
        800 & 20 & 4 & & 32.31 & 7.54 & 76.66 & 0/3 & 0/3\\
        \cmidrule{4-9}
        & & & \textbf{Average} & \textbf{28.27} & 
        \textbf{8.19} & \textbf{70.03} & \textbf{0/9}& \textbf{1/9}\\
        \midrule
        400 & 20 & 5 & & 17.36 & 8.05 & 53.63 &0/3&0/3\\
        600 & 20 & 5 & & 25.38 & 12.36 & 51.30 &0/3&0/3\\
        800 & 20 & 5 & & 28.24 & 13.32 & 52.83 &0/3 &0/3\\
        \cmidrule{4-9}
        & & & \textbf{Average} & \textbf{23.66} & \textbf{11.25} & \textbf{52.45} & \textbf{0/9} & \textbf{0/9}\\
        \midrule
        & \multicolumn{3}{c}{} & \textbf{29.07} & \textbf{7.47} & \textbf{74.30} & \textbf{0/27} & \textbf{6/27}\\
        \bottomrule
    \end{tabular}
\end{table}

\Cref{fig:convergence_process} reports the progression of upper and lower bounds for both BD with VIs and the solver on three large instances with $S=3$, where BD closes the optimality gap. 
In \Cref{fig:bd_400,fig:bd_600,fig:bd_800} we observe a steady and rapid progression of both upper (dual) and lower (primal) bounds when using BD. At the same time, \Cref{fig:mip_400,fig:mip_600,fig:mip_800} illustrate that, for the same instances, the solver is able to improve the primal bound but fails to improve the dual bound. Particularly, the solver finds primal solutions of quality comparable to those found by BD. Nevertheless, BD typically obtains such solutions faster. We further observe that the optimality gap of the solver tends to increase with the number of customers. This indicates that part of the explanation of the gap is to be found in the size of the LP relaxation. This further justifies the use of a decomposition method.
 
\captionsetup[subfigure]{font=scriptsize}
\begin{figure}[t]
    \centering
    \begin{subfigure}[b]{0.3\textwidth}
    \includegraphics[width=\textwidth]{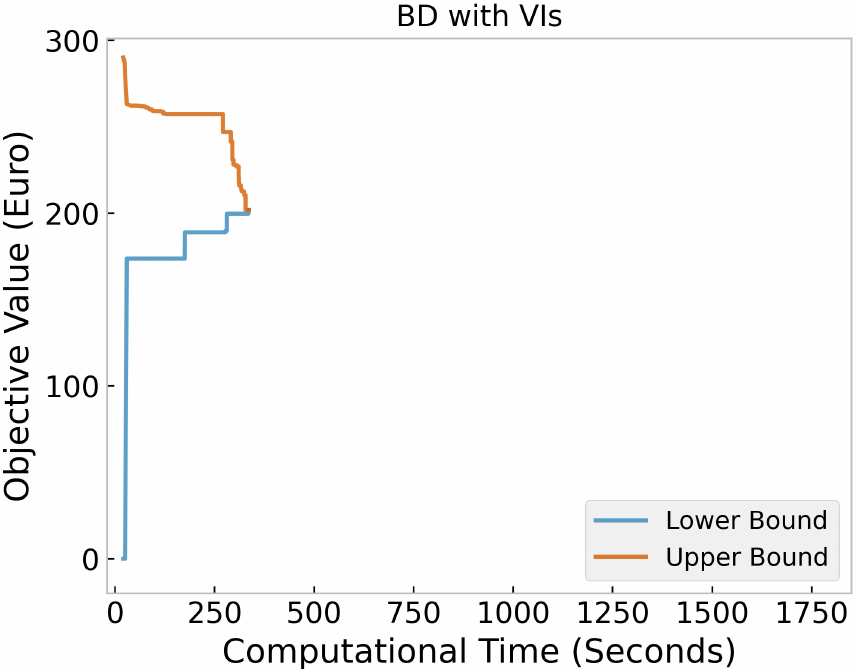}
    \caption{$\vert\set{K}\vert=400$, $\lvert\mathcal{V}\rvert=200$}
    \label{fig:bd_400}
    \end{subfigure}
    \hfill
    \begin{subfigure}[b]{0.3\textwidth}
    \includegraphics[width=\textwidth]{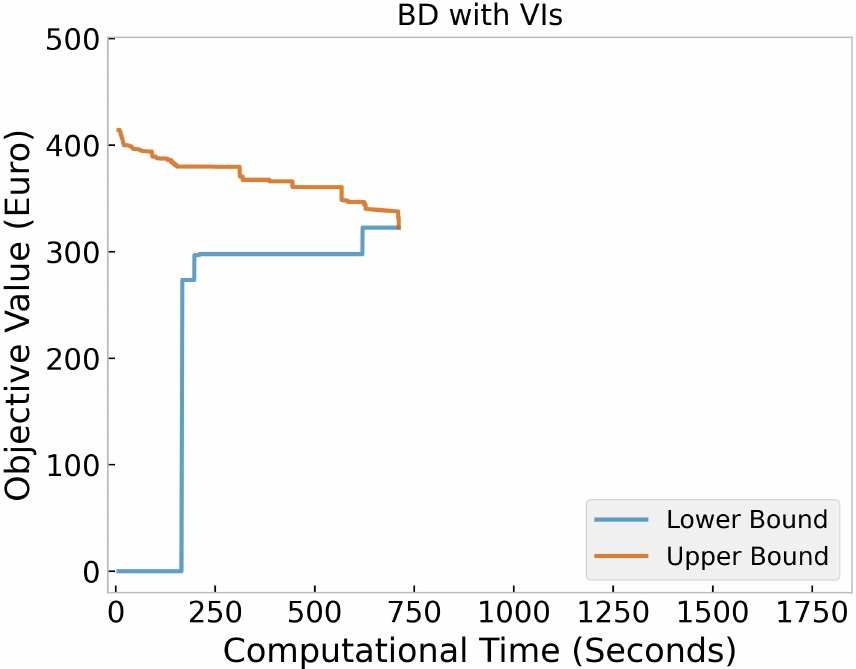}
    \caption{$\vert\set{K}\vert=600$, $\lvert\mathcal{V}\rvert=300$}
    \label{fig:bd_600}
    \end{subfigure}
    \hfill
    \begin{subfigure}[b]{0.3\textwidth}
    \includegraphics[width=\textwidth]{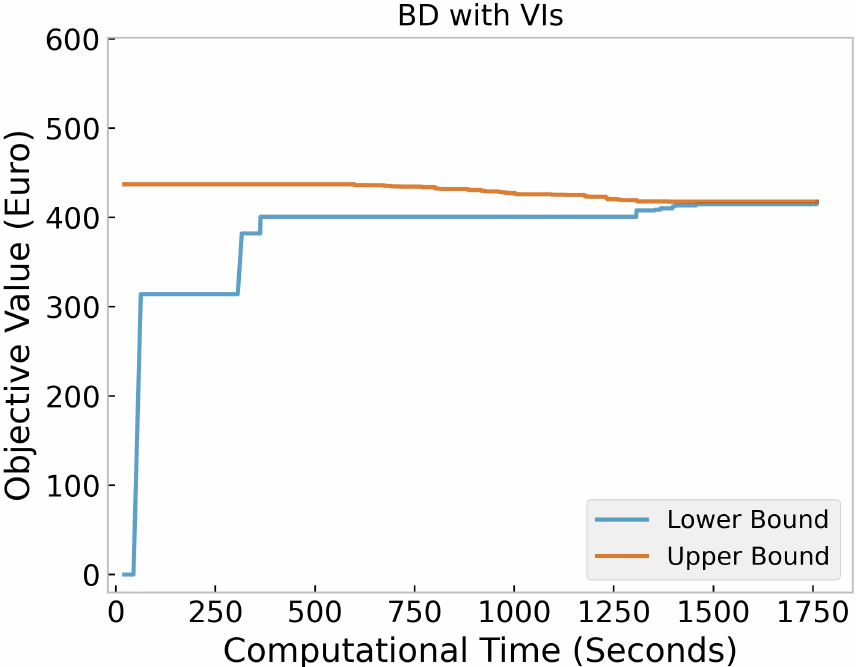}
    \caption{$\vert\set{K}\vert=800$, $\lvert\mathcal{V}\rvert=400$}
    \label{fig:bd_800}
    \end{subfigure}

    \medskip

    \begin{subfigure}[b]{0.3\textwidth}
    \includegraphics[width=\textwidth]{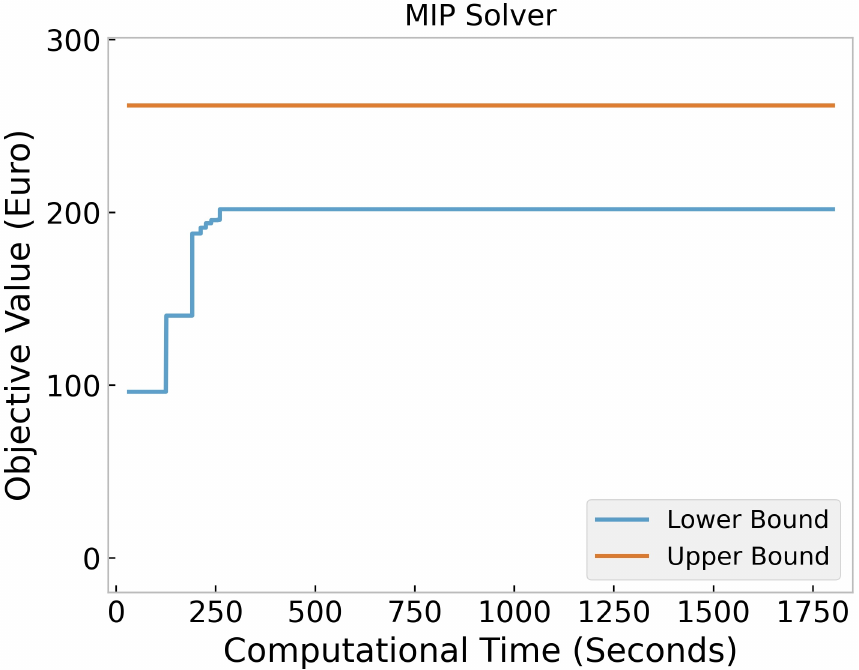}
    \caption{$\vert\set{K}\vert=400$, $\lvert\mathcal{V}\rvert=200$}
    \label{fig:mip_400}
    \end{subfigure}
    \hfill
    \begin{subfigure}[b]{0.3\textwidth}
    \includegraphics[width=\textwidth]{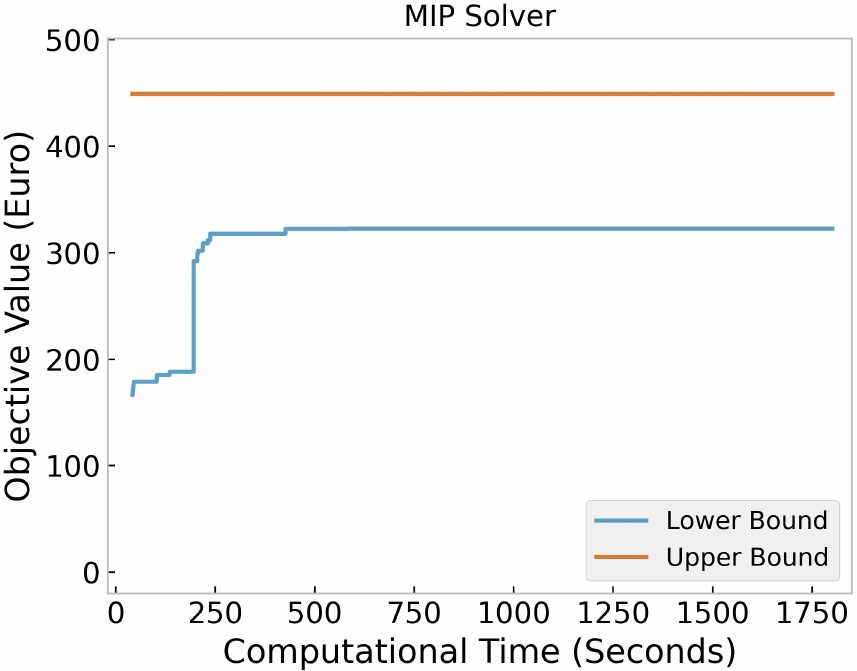}
    \caption{$\vert\set{K}\vert=600$, $\lvert\mathcal{V}\rvert=300$}
    \label{fig:mip_600}
    \end{subfigure}
    \hfill
    \begin{subfigure}[b]{0.3\textwidth}
    \includegraphics[width=\textwidth]{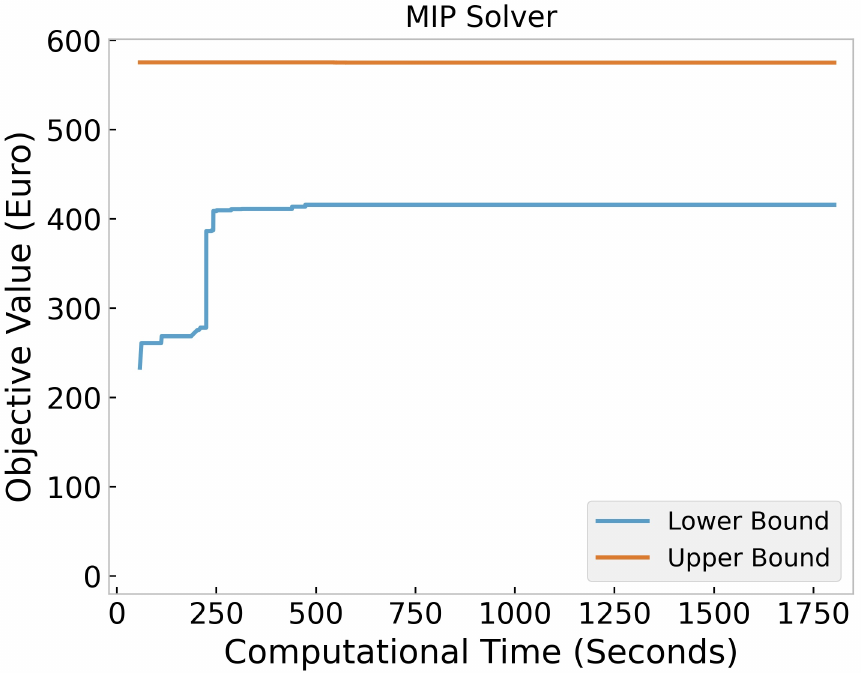}
    \caption{$\vert\set{K}\vert=800$, $\lvert\mathcal{V}\rvert=400$}
    \label{fig:mip_800}
    \end{subfigure}
    \caption{Progression of upper and lower bounds for the large instances with three zones.}
    \label{fig:convergence_process}
\end{figure}

\subsection{Result analysis} \label{sec: result analysis}
In this section, we provide some evidence on how different parameters, such as the number of zones, customers and fleet size, determine business performance.

We begin by analyzing the impact of the number of customers and the number of vehicles on profits. 
\Cref{fig: zone_profit} reports the average profit as a function of the number of customers and fleet size. Particularly, for $\vert\set{K}\vert=400$ we consider three fleet sizes, namely $(100,150,200)$ which we refer to a low, medium and how availability. Similarly, for $\vert\set{K}\vert=600$, the three fleet sizes are $(150,200,300)$ and for $\vert\set{K}\vert=800$ the three fleet sizes are $(200,300,400)$.
\Cref{fig: zone_profit} illustrates that the profit grows linearly with the number of customers, regardless of the number of zones $S$. When the customers volume doubles from $400$ to $800$, the profit increases by $102.85\%$, $102.46\%$, and $93.71\%$ for the $3$-, $4$-, and $5$-zone partitions, respectively. Profits grow also with the fleet size, though the pattern is less regular. In general, a medium level of vehicle availability suffices to secure average profit returns, given a certain customer volume.
Finally, we observe that a finer partition of the carsharing stations (i.e., larger $S$) yields an increase of the average profit. A finer partition allows the service provider to better adapt prices to the specific characteristics of the customers in different parts of the city.

\begin{figure}[t]
    \centering
    \begin{subfigure}[b]{0.325\textwidth}
    \includegraphics[width=\textwidth]{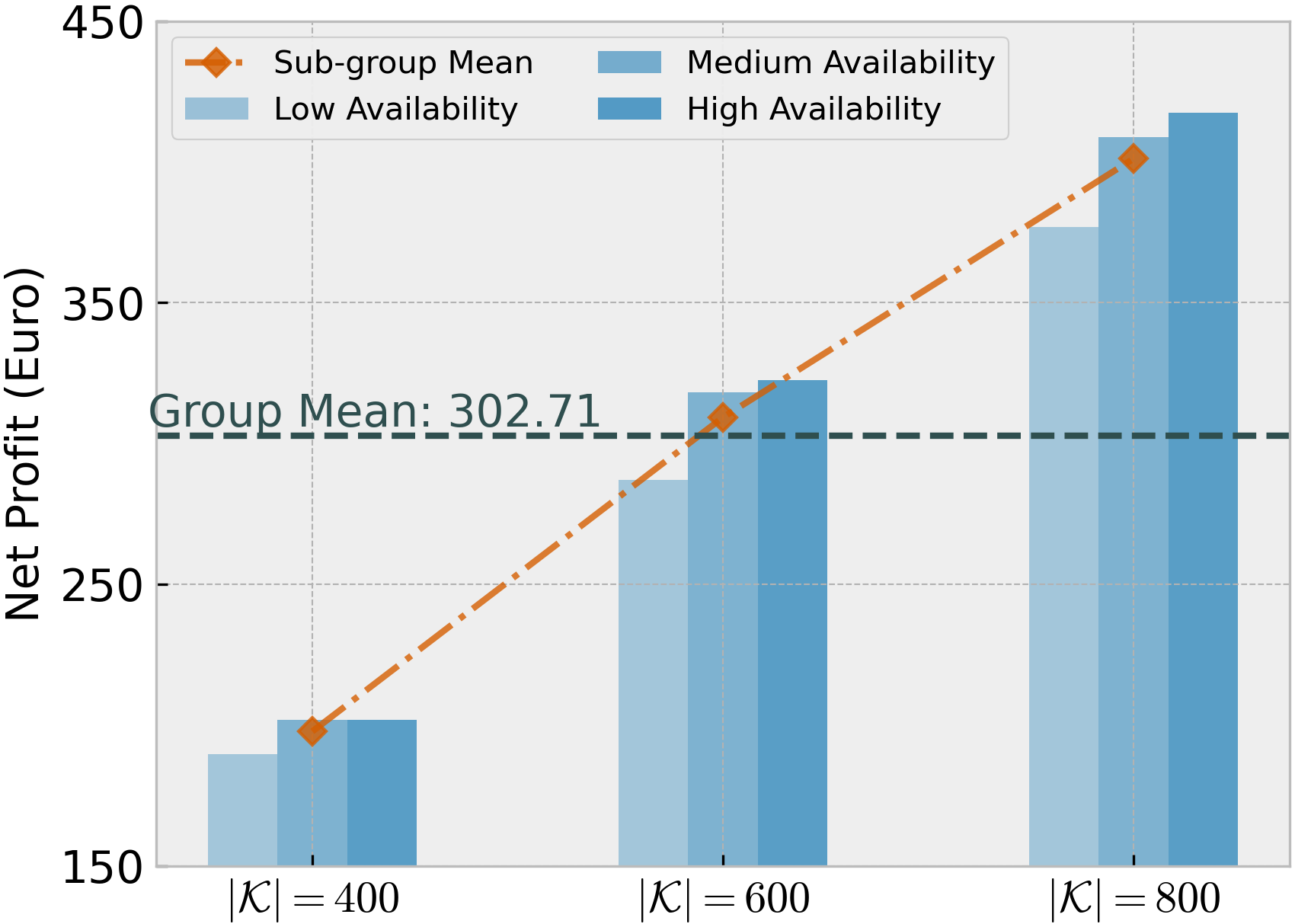}
    \caption{$S=3$}
    \label{fig: 3_zone_profit}
    \end{subfigure}
    \hfill
    \begin{subfigure}[b]{0.325\textwidth}
    \includegraphics[width=\textwidth]{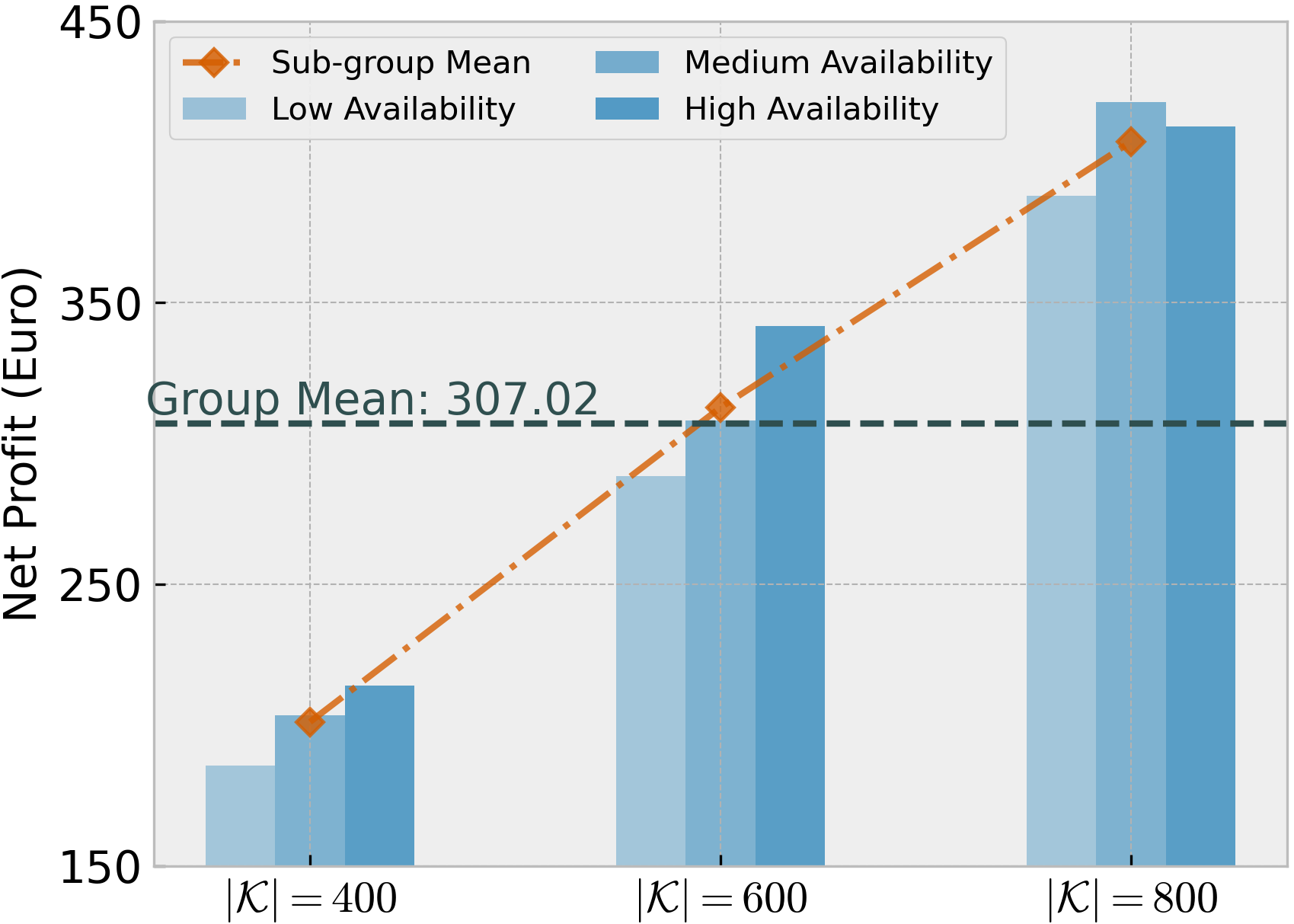}
    \caption{$S=4$}
    \label{fig: 4_zone_profit}
    \end{subfigure}
    \hfill
    \begin{subfigure}[b]{0.325\textwidth}
    \includegraphics[width=\textwidth]{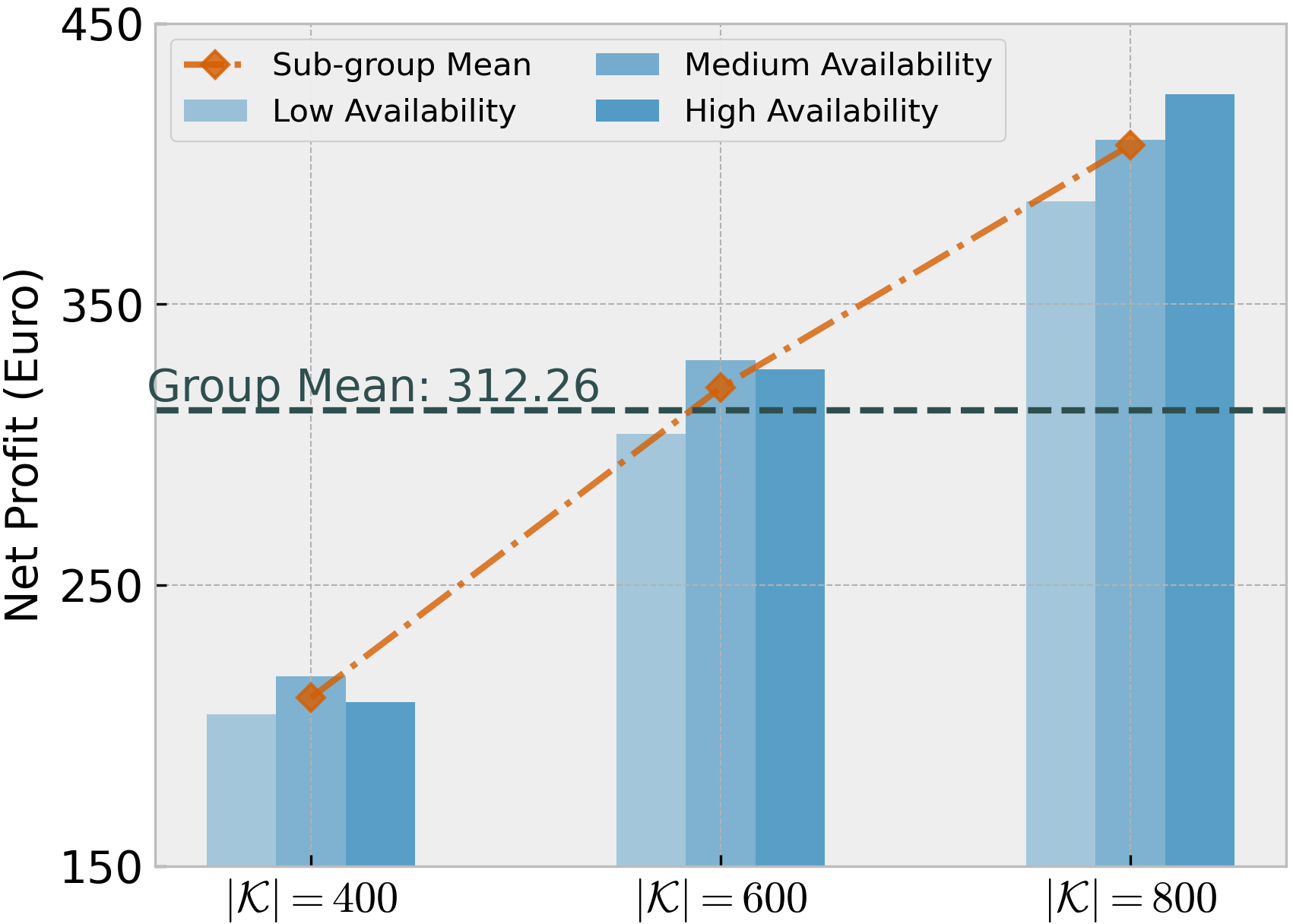}
    \caption{$S=5$}
    \label{fig: 5_zone_profit}
    \end{subfigure}
    \caption{Carsharing system profits under various scales of zone numbers, customer volume, and vehicle availability.}
    \label{fig: zone_profit}
\end{figure}

Following, we evaluate the performance of the carsharing system when using the optimal zonification determined by model \eqref{eq:pricing_problem} (we assume $S=3$ zones) compared two benchmarks. The first benchmark is obtained by partitioning the carsharing stations according to the zip codes and keeping that partition fixed when optimally setting prices. Also in this case we use $S=3$ zones.  We refer to this benchmark as the \textit{zip-code partition} benchmark. The second benchmark consists of not partitioning the carsharing stations and thus optimally choosing the drop-off fee that must apply between all pairs of carsharing stations. This benchmark is referred to as the \textit{no-partition} benchmark. 
\Cref{tab:css performance} reports profits and service rates with $400$, $600$ and $800$ customers assuming a fleet size of $200$, $300$, and $400$, respectively.
Service rates are calculated as the number of customers served over the total number of requests $\vert\set{R}\vert$.

The results illustrate that by implementing an optimal partition, and thus ensuring co-optimized zones and prices, the profit increases substantially both with respect to the zip-code partition and to the no-partition benchmark. Specifically, when compared to the zip-code partition benchmark, the operating profit with the optimal partition  increases by $10.61\%$, $6.73\%$, and $5.54\%$ for cases with $400$, $600$, and $800$ customers, respectively. The improvement becomes even more pronounced when compared with the no-partition scenario. Particularly, the profit increases by $32.48\%$, $26.51\%$, and $21.88\%$ for three different customer volumes.

When using the optimal partition, the average service rate is $76.43\%$, slightly below the service rate of the zip-code partition benchmark, and significantly higher than the no-partition benchmark.
We recall that model \eqref{eq:pricing_problem} is designed to maximize profits, thus service rates are not directly targeted. Nevertheless, the model and solution algorithm can be easily adapted to include measures of performance based on service rates. This would only require changing the specification of $Q(a,\lambda,\alpha)$.

\begin{table}[t]
\scriptsize
\caption{Service rates and profits for the optimal partition compared to the zip-code partition and no-partition benchmarks. The results are obtained with $S=3$ zones.}
\centering
    \begin{tabular}{ccccccc}
    \toprule
         \multirow{2}{*}{$\vert\mathcal{K}\vert$}& \multicolumn{2}{c}{Optimal partition} & \multicolumn{2}{c}{Zip-code partition} & \multicolumn{2}{c}{No-partition}\\
         \cmidrule{2-7}
         &  Service rate (\%) & Profit (Euro) & Service rate (\%) & Profit (Euro) & Service rate (\%) & Profit (Euro) \\
    \midrule
         $400$& 78.13 & 201.82 & 81.25 & 182.46 & 56.25 &152.34 \\
         $600$ & 78.63 & 322.58 & 79.49 & 302.24 & 49.57 & 254.98 \\
         $800$ & 72.53 & 417.56 & 81.69 & 395.64 & 55.63 & 342.60\\
    \bottomrule
    \end{tabular}
    \label{tab:css performance}
\end{table}

Part of the explanation beyond the increase in profit when using the optimal partition compared to the zip-code partition can be found in \Cref{fig: service rate}.
The figure reports the percentage of customers served at different price levels. It emerges that the number of customers served at a negative drop-off fee (Euro $-1$) is much higher when using the zip-code partition compared to the optimal partition. This also contributes to explaining the slightly higher service rates generated by the zip-code partition. When using the optimal partition, significantly fewer customers are served at the negative drop-off fee. The customers are instead more evenly spread across the different drop-off fees.  This is mainly due to the ability to adjust the partition of the business area, which allows determining zones that better capture the price preferences of the customers.
\begin{figure}[ht]
    \centering
    \begin{subfigure}[b]{0.325\textwidth}
    \includegraphics[width=\textwidth]{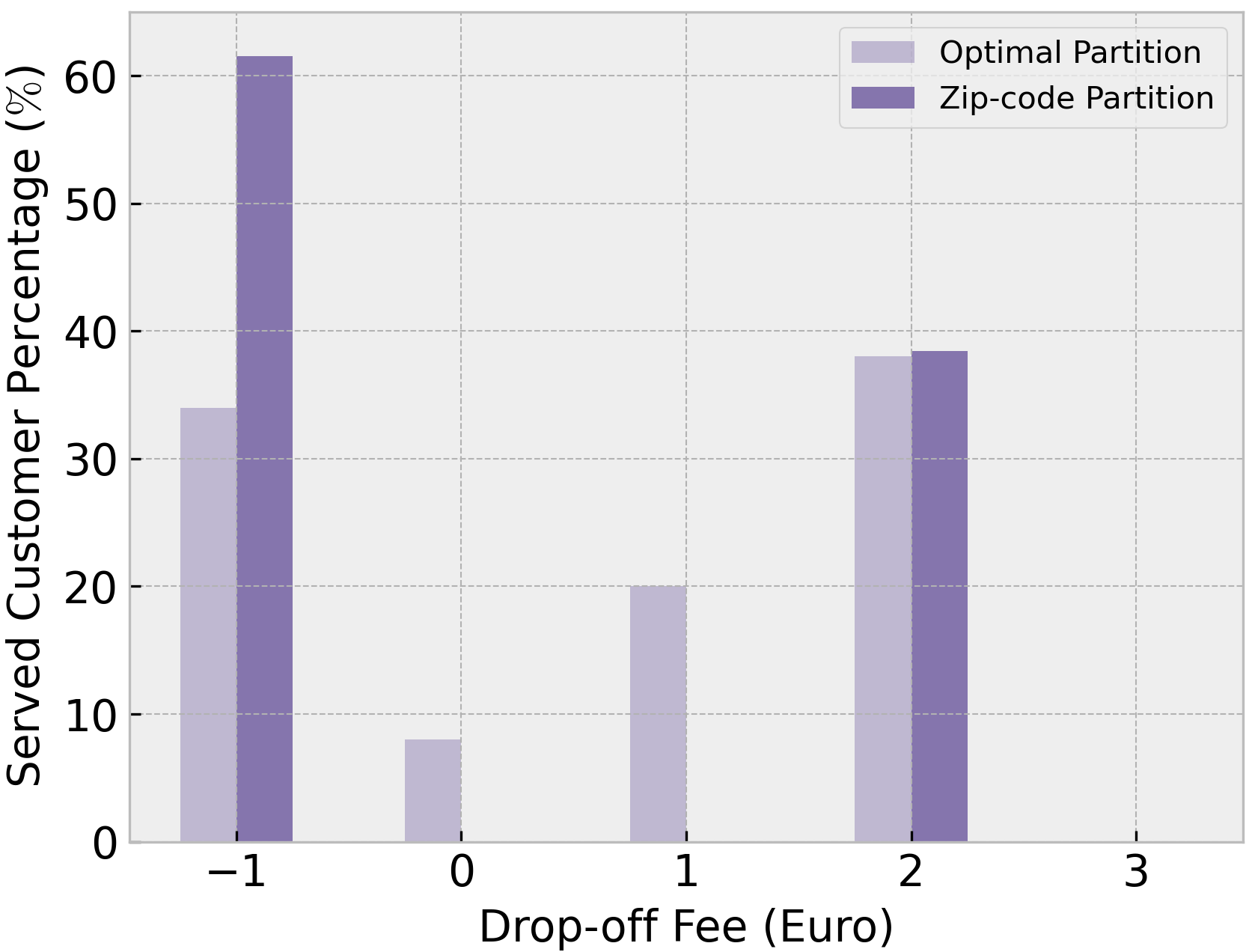}
    \caption{$\vert\mathcal{K}\vert=400$}
    \label{fig: service_rate_400}
    \end{subfigure}
    \hfill
    \begin{subfigure}[b]{0.325\textwidth}
    \includegraphics[width=\textwidth]{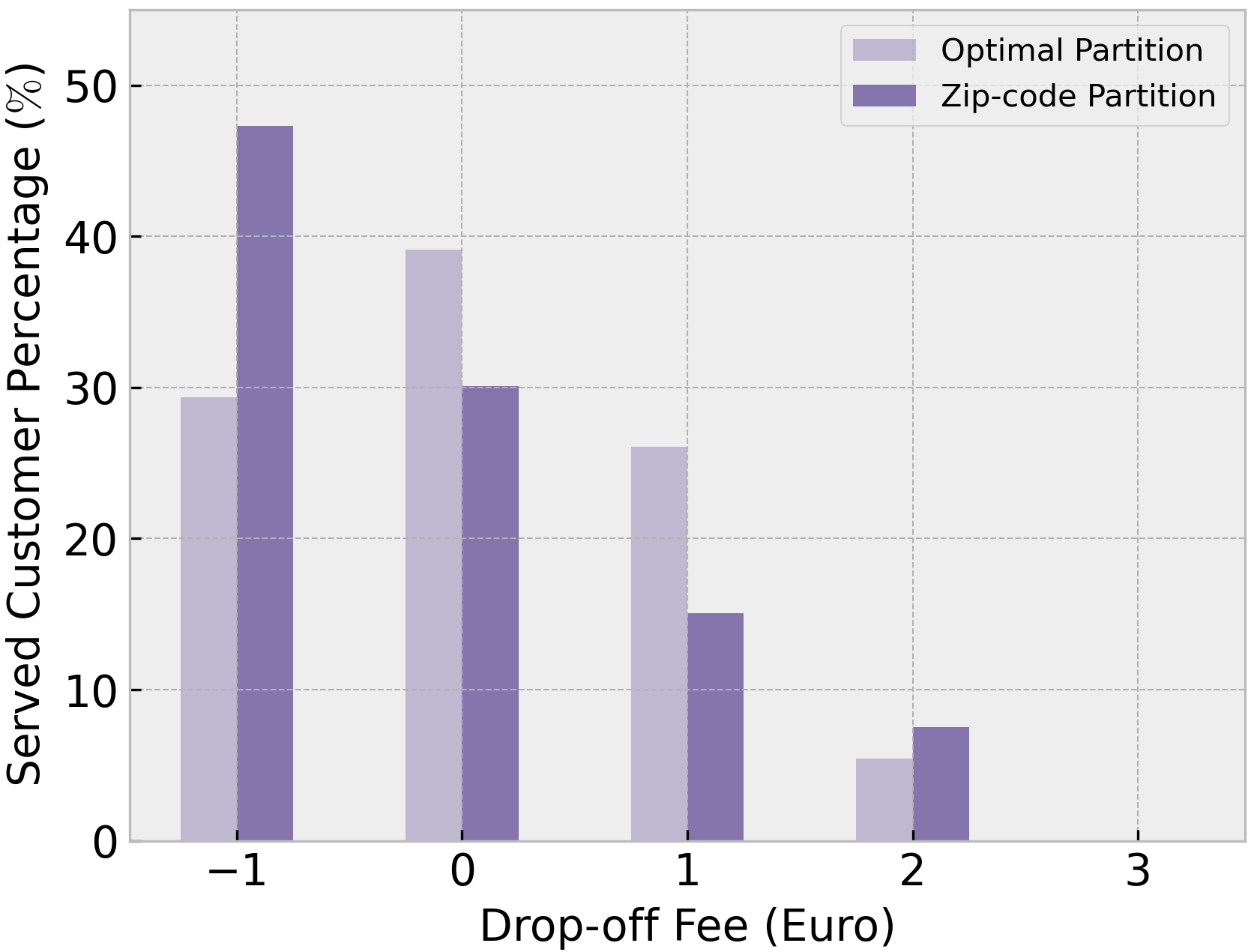}
    \caption{$\vert\mathcal{K}\vert=600$}
    \label{fig: service_rate_600}
    \end{subfigure}
    \hfill
    \begin{subfigure}[b]{0.325\textwidth}
    \includegraphics[width=\textwidth]{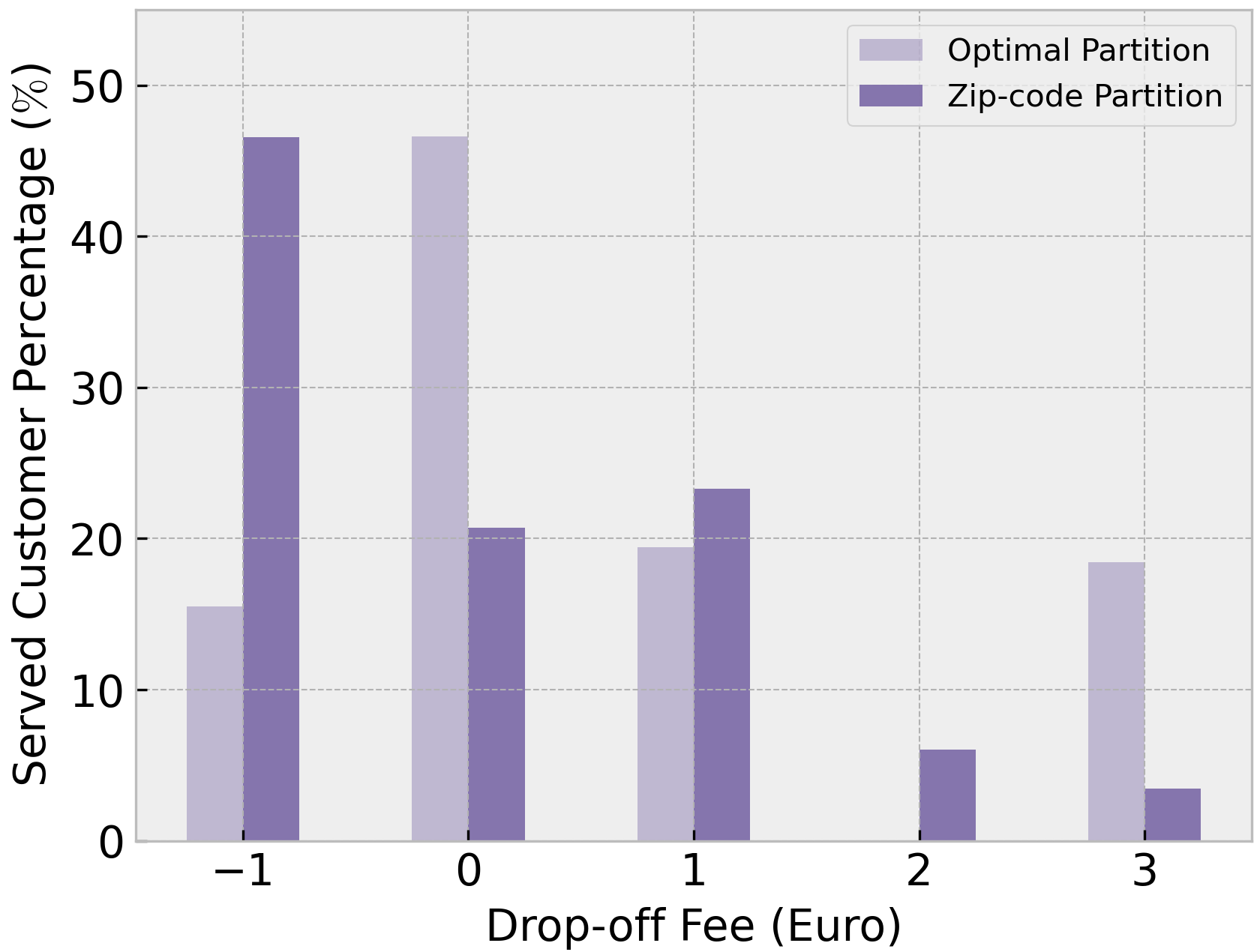}
    \caption{$\vert\mathcal{K}\vert=800$}
    \label{fig: service_rate_800}
    \end{subfigure}
    \caption{Percentage of the customers served at the different available price levels.}
    \label{fig: service rate}
\end{figure}

\color{black}
\section{Conclusions} \label{sec: conclusion}
In this paper, we studied the problem of jointly deciding price zones and trip prices in a carsharing service. To partition the carsharing stations into distinct pricing zones we introduced a special type of tessellation and proved that such tessellation fulfills the requirement that pricing zones are ``visually disjoint''. The problem of choosing such a tessellation, and optimally assigning prices has been formulated as a binary integer programming problem. To solve the problem we designed a tailored integer Benders decomposition, which incorporates a number of problem-specific improvements.

We performed extensive experiments on instances based on a carsharing system operated in Copenhagen. The results illustrate that the proposed Benders decomposition significantly outperforms a state-of-the-art solver on instances of size comparable with business practices. Particularly, our method reports an average $7.47\%$ optimality gap, which is significantly lower to that reported by the solver ($29.07\%$). Furthermore, our method closes the optimality gap on a number of instances while no instance is solved to optimality by the solver.  

Furthermore, our results illustrate that the optimal zonification yields profits that are $7.01\%$ higher (on average) compared to the zonification determined by zip codes and $25.61\%$ higher compared to the prevailing practice of having only one pricing zone. Notably, the optimal zonification yields also high service rates, by serving $76.43\%$ of the customers, on average.

\newpage
\begin{APPENDICES}
\crefalias{section}{appendix}
\section{Algorithm for the solution {$Q(a,\lambda,\alpha)$}}\label{app:alg}
We present here \Cref{greedy_alg} to solve problem \eqref{eq:allocation} to optimality.
\begin{algorithm}[h]
    \footnotesize
	\caption{Exact computation of $Q(a, \lambda, \alpha)$}
	\begin{algorithmic}[1]
	\State \textbf{Input:} $a$, $\lambda$, $\alpha$
	\State $\mathcal{V}^A \gets \mathcal{V}$ \Comment{$\mathcal{V}^A$ is the set of remaining available vehicles.}
	\State $y_{vrl} \gets 0, \forall v \in \mathcal{V}, r \in \mathcal{R}, l \in \mathcal{L}_r$
	\State $Q(a, \lambda, \alpha) \gets 0$
	\State Sort the requests $\mathcal{R}$ in non-decreasing order of customer index $k(r)$
	\For{request $r \in \mathcal{R}$}
	\State $L_{i(r),j(r)} = \sum_{l \in \mathcal{L}} L(l) \alpha_{i(r),j(r),l}$ \Comment{The fee applied between $i(r)$ and $j(r)$.}
	\If{$L_{i(r),j(r)} \leq l(r)$}
	\For {$v \in \mathcal{V}^A$}
	\If {$G_{v,i(r)} = 1$}
    \State $\set{V}^A\gets \set{V}^A\setminus\{v\}$ \Comment{Vehicle $v$ becomes unavailable.}
	\State $y_{v,r,L_{i(r),j(r)}} \gets 1 $
	\State $Q(a, \lambda, \alpha) \gets Q(a, \lambda, \alpha) + R^N_{r,L_{i(r),j(r)}}$ \Comment{$R^N_{r,L_{i(r),j(r)}}$ is the net revenue of $r$ at fee $L_{i(r),j(r)}$.}
	\EndIf
	\EndFor
	\EndIf
	\EndFor
	\State \Return $Q(a, \lambda, \alpha)$, and $y_{vrl} ~~\forall v \in \mathcal{V}, r \in \mathcal{R}, l \in \mathcal{L}_r$
	\end{algorithmic} \label{greedy_alg}
\end{algorithm}
\Cref{greedy_alg} first initializes the set of available shared vehicles $\mathcal{V}^A$, the solution $y_{vrl}$ and the objective value $Q(a, \lambda, \alpha)$. All requests in the set $\mathcal{R}$ are sorted in a non-decreasing order of customer index $k(r)$. Following the rule of ``first-come, first-served", the algorithm iteratively checks whether the fee applied between the origin and destination is accepted and whether there is a vehicle available in the origin zone. If the fee level $L_{i(r),j(r)}$ between request $r$'s origin station $i(r)$ and destination station $j(r)$ is lower than the highest acceptable pricing level $l(r)$ and there is at least one available vehicle $v$ at customer's origin $i(r)$ (i.e., a vehicle with $G_{v,i(r)}=1$), customer $r$ will be served at pricing level $L_{i(r),j(r)}$. We then remove the used vehicle $v$ from the available vehicles, set the value of $y_{v,r,L_{i(r),j(r)}}$ to $1$, and update $Q(a, \lambda, \alpha)$.

\section{Extensive MILP formulation}\label{app: MILP form}
In this appendix, we provide the extensive MILP formulation of problem \eqref{eq:pricing_problem} when considering the specific $Q(a, \lambda, \alpha)$ function defined in \Cref{sec:customer_choices} as follows
\begin{subequations}
\begin{align}
\max_{a, \lambda, \alpha} &~~ \sum_{r \in \mathcal{R}} \sum_{v \in \mathcal{V}} \sum_{l \in \mathcal{L}_r} R_{rl}^N y_{vrl}\\
& \sum_{j\in\mathcal{I}}a_{ii} = S \\
& \sum_{j\in\mathcal{I}}a_{ij} =1 &\forall i\in\mathcal{I}\\
& a_{ij}\leq a_{jj} &\forall i,j\in\mathcal{I}\\
& d(i,j_1)a_{i,j_1} \leq d(i,j_2)a_{j_2,j_2}+ d(i,j_1) (1-a_{j_2,j_2}) &\forall i,j_1,j_2\in\mathcal{I}\\
& \sum_{l \in \mathcal{L}} \lambda_{ijl} \geq a_{ii} + a_{jj} - 1 &\forall i, j \in \mathcal{I}\\
& \sum_{l \in \mathcal{L}} \lambda_{ijl} \leq a_{ii} &\forall  i, j \in \mathcal{I}\\
& \sum_{l \in \mathcal{L}} \lambda_{ijl} \leq a_{jj} &\forall  i, j \in \mathcal{I}\\
& a_{i_1,j_1} + a_{i_2,j_2} + \lambda_{j_1,j_2,l} \leq \alpha_{i_1,i_2,l} + 2 &\forall i_1, i_2, j_1, j_2 \in \mathcal{I}, \forall l \in \mathcal{L}\\
& \sum_{l \in \mathcal{L}} \alpha_{ijl} = 1 &\forall i,j \in \mathcal{I}\\
&\sum_{v\in \mathcal{V}} \sum_{l \in \mathcal{L}_r} y_{vrl} \leq 1 &\forall r \in \mathcal{R}\\
&\sum_{r \in \mathcal{R}} \sum_{l \in \mathcal{L}_r} y_{vrl} \leq 1 &\forall v \in \mathcal{V}\\
& \sum_{v \in \mathcal{V}} y_{vrl} \leq \alpha_{i(r),j(r),l} &\forall r \in \mathcal{R}, l \in \mathcal{L}_r\\
&\sum_{l \in \mathcal{L}_{r}}y_{vrl} + \sum_{r_1 \in \mathcal{R}_r} \sum_{l_1 \in \mathcal{L}_{r_1}} y_{v,{r_1},l} \leq G_{v,i({r})} &\forall r \in \mathcal{R}, v \in \mathcal{V}\\
& y_{vrl} + \sum_{r_1 \in \mathcal{R}_{r}} \sum_{l_1 \in \mathcal{L}_{r_1}} y_{v,r_1,l_1} + \sum_{v_1 \in \mathcal{V}\setminus\{v\}} y_{{v_1},r,l} \geq \alpha_{i(r),j(r),l} + G_{v,i(r)} - 1 &\forall r \in \mathcal{R}, v \in \mathcal{V}, l \in \mathcal{L}_{r}\\
&a_{ij}\in\{0,1\} & \forall i,j\in\set{I}\\
& \lambda_{ijl}, \alpha_{ijl} \in \{0,1\} &\forall i,j \in \mathcal{I}, \forall l \in \mathcal{L}\\
& y_{vrl} \in \{0,1\} &\forall r \in \mathcal{R}, v \in \mathcal{V}, l \in \mathcal{L}_{r}
\end{align} \label{compact_formulation}
\end{subequations}

\end{APPENDICES}

\ACKNOWLEDGMENT{This research is supported by the research project ``Shared mobility: Towards sustainable urban transport'' (grant no. 1127-00176B) funded by Danmarks Frie Forskningsfond (DFF).}

\bibliography{reference}
\end{document}